\def\({\left(}
\def\){\right)}
\def\Nx{\nabla}
\def\eb{\varepsilon}
\def\Cal{\mathcal}
\def\eb{\varepsilon}
\def\rw{\rightarrow}
\def\Om{\Omega}
\def\om{\omega}
\def\la{\lambda}
\def\R {\mathbb{R}}
\def\F {{\mathcal F}}
\def\Z {{\mathbb Z}}
\def \p {\partial}
\def\px{\partial_x}
\def\pt{\partial_t}
\def \ptu  {\partial_t u}
\def \and{\qquad\text{and}\qquad}
\def \px {\partial_x}
\def\Dx{\Delta}
\newcommand{\be}{\begin{equation} }
\newcommand{\ee}{\end{equation} }
\newtheorem{proposition}{Proposition}[section]
\newtheorem{theorem}[proposition]{Theorem}
\newtheorem{lemma}[proposition]{Lemma}
\theoremstyle{definition}
\newtheorem{remark}[proposition]{Remark}
\numberwithin{equation}{section}
\def \no#1#2#3 {{\bf #1} (#3), #2.}
\def \eds#1#2#3 {#1, #2, #3.}
\title[Feedback Control]
{Finite-Parameters Feedback Control for Stabilizing Damped Nonlinear Wave Equations}
\author[]
{Varga K. Kalantarov and Edriss S. Titi}
\address{(V.K.Kalantarov) Department of mathematics, Ko{\c c} University,
\newline\indent Rumelifeneri Yolu, Sariyer 34450\newline\indent
Sariyer, Istanbul, Turkey} \email{vkalantarov@ku.edu.tr}
\address{(E.S.Titi) Department of Mathematics, Texas AM University, 3368 TAMU,
\newline\indent College Station, TX 77843-3368, USA.  {\bf ALSO},
\newline\indent Department of Computer Science and Applied Mathematics, Weizmann
\newline\indent Institute of Science, Rehovot 76100, Israel.}
\email{titi@math.tamu.edu}
  \email{edriss.titi@weizmann.ac.il}
 \keywords{damped wave equation, strongly damped wave
equation, feedback control, stabilization, finite-parameters feedback control. }
\begin{document}

\begin{abstract}{In this paper we introduce  a finite-parameters feedback control algorithm for stabilizing solutions of various classes of
damped nonlinear wave equations. Specifically, stabilization the zero steady state solution  of initial boundary value problems for nonlinear weakly and
strongly damped wave equations, nonlinear wave equation with
nonlinear damping term and some related nonlinear wave equations, introducing a feedback control terms that employ parameters, such as,  finitely many Fourier modes, finitely many volume elements and
finitely many nodal observables and controllers. In addition, we also establish the stabilization of the zero steady state solution to initial boundary value problem for the damped
nonlinear wave equation with a controller acting in a proper subdomain. Notably, the feedback controllers proposed here can be equally applied for stabilizing other solutions of the underlying equations.}
\end{abstract}
\subjclass{35B40, 35B41, 35Q35}
\date{January 3, 2015}
\maketitle

\section{Introduction}\label{s0}

This paper is devoted to the study of finite-parameters feedback control for stabilizing solutions of initial boundary value problems for nonlinear damped wave equations.
Feedback control, stabilization and control of wave equations
is a well established area of control theory.
Many interesting results were obtained, in the last dacades, on stabilization of linear and nonlinear wave equations (see , e.g., \cite{BaSl}, \cite{Har}, \cite{Kom},\cite{LaTr},\cite{Liu},\cite{Mar},\cite{PeVaZu}   and references therein). Most of the problems considered
are problems of stabilization by interior and boundary controllers involving linear or nonlinear damping terms.
 However, only few results are known on feedback stabilization  of linear hyperbolic equation by employing finite-dimensional controllers (see, e.g., \cite{Ba} and references therein).\\
We study the problem of feedback stabilization of initial boundary value problem for
damped nonlinear wave equation  \be\label{iw1} \pt^2u- \Dx u +b \ptu
-a u+f(u)= -\mu w, \ x \in \Om, t>0, \ee
nonlinear wave equation
with nonlinear damping term
 \be\label{iw1a} \pt^2u- \Dx u +b g(\ptu)
-a u+f(u)= -\mu w, \ x \in \Om, t>0, \ee
and the strongly damped wave equation
 \be\label{iw2} \pt^2 u- \Dx u -b\Dx \ptu
-\la u+f(u)= -\mu w, \ x \in \Om, t>0.
 \ee

Here and in what follows   $\mu,a,b,\nu $ are given positive
parameters, $w$ is a feedback control input (different for different problems),
 $f(\cdot): \R\rw \R$ is a given continuously differentiable function such that $f(0)=0$ and

\be\label{F} f(s)s-\F(s)\geq 0, f'(s)\geq 0, \ \forall s \in \R, \ \
\F(s):=\int_0^sf(\tau)d\tau, \ \ \forall s\in \R.
\ee

We show that the feedback controller proposed in \cite{AzTi} for nonlinear parabolic equations can
be used for stabilization of above mentioned wide class of nonlinear
dissipative wave equations. We also show stabilization of the initial boundary value problem for equation \eqref{iw1},
with the control input acting on some proper subdomain of $\Om$.\\
Our study, as well as the results obtained in \cite{AzTi}, are inspired by the fact that dissipative dynamical systems generated by initial boundary value problems, such as the 2D
Navier - Stokes equations, nonlinear reaction-diffusion equation, Cahn-Hilliard equation,
damped nonlinear Schr\"{o}dinger equation, damped nonlinear Klein - Gordon equation, nonlinear strongly damped wave equation and related equations and systems
have a finite dimensional asymptotic (in time) behavior (see, e.g., \cite{BaVi},\cite{Ch}- \cite{CDT96},\cite{FoPr}-\cite{FoTi},\cite{La1},\cite{La2},\cite{Te} and references therein). This property has also been implicitly used in the feedback control of the Navier-Stokes equations overcoming the spillover phenomenon in \cite{Cao-Kevrekidis-Titi}\\
Motivated by this observation, we specifically  show\\
\begin{enumerate}
\item  stabilization of  $1D$ damped wave equation
\eqref{iw1}, under Neuman boundary conditions, when the feedback control input
employs observables based on measurement of {\it finite volume elements},\\

\item stabilization of  equation \eqref{iw1}, under
homogeneous Dirichlet boundary condition, with one feedback
controller supported on some proper subdomain of $\Om\subset \R^n$,\\

\item  stabilization of  equation \eqref{iw1} and equation \eqref{iw2}, under
the Dirichlet boundary condition, when the feedback control involves
{\it finitely many Fourier modes of the solution}, based on eigenfunctions of the Laplacian
subject to homogeneous Dirichlet boundary condition,

\item stabilization of the $1D$ equation \eqref{iw2},
when the feedback control incorporates observables at {\it finitely
many nodal points}.
\end{enumerate}

In the sequel we will use the notations:\\
\begin{itemize}
\item $(\cdot,\cdot)$ and $\|\cdot\|$ denote the inner product and
the norm of $L^2(\Om)$,\\
and the following inequalities:\\
\item Young's inequality
\be\label{yng}
ab\leq \eb a^2+\frac \eb 4 b^2,
\ee
that is valid for all positive numbers $a,b$ and $\eb$
\item the Poincar\'{e} inequality
\be\label{pnk}
\|u\|^2\leq \la_1^{-1}\|\Nx u\|^2,
\ee
which holds for each $u\in H_0^1(\Om)$.
\end{itemize}

\section{Feedback control of damped nonlinear wave equations}

In this section, we show that the initial boundary value problem for nonlinear damped wave equation can be stabilized
by employing finite volume elements feedback controller, feedback controllers acting in a subdomain of $\Om$, or feedback controllers involving finitely many Fourier modes.\\

{\bf 1.  Stabilization employing  finite volume elements feedback control.} We consider the
following feedback control problem
\begin{equation}\label{kg1}
\begin{cases}
\pt^2u- \nu \px^2{u} +b \ptu -\la u+f(u)=
-\mu\sum\limits_{k=1}^N\overline{u}_k\chi_{J_k}(x), \ \ x \in (0,L), \ t>0, \\
\px{u}(0,t)=\px{u}(L,t)=0, \ t>0,\\
u(x,0=u_0(x), \ \ \ptu (x,0)=u_1(x), \ \ x\in (0,L).
\end{cases}
\end{equation}

Here $J_k:=\left[(k-1)\frac LN, k\frac LN\right),$ for $k=1,2,\cdots N-1$ and $J_N=[\frac{N-1}{N}L, L]$,\\
$\overline{\phi}_k:=\frac1{|J_k|}\int\limits_{J_k}\phi(x)dx$, and
$\chi_{J_k}(x)$ is the characteristic function of the interval
$J_k$. In what follows we will need the following lemma

\begin{lemma}\label{pr}(see \cite{AzTi})  Let $\phi\in H^1(0,L)$.  Then
\be\label{p1}
\|\phi-\sum\limits_{k=1}^N\overline{\phi}_k\chi_{J_k}(\cdot)\|\leq h
\|\phi_x\|, \ee and \be\label{p2} \|\phi\|^2\leq
h\sum\limits_{k=1}^N\overline{\phi}_k^2+\left(\frac{h}{2\pi}\right)^2\|\phi_x\|^2,
\ee where $h:=\frac LN.$
\end{lemma}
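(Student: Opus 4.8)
The plan is to reduce both estimates to one-dimensional Poincar\'{e}--Wirtinger inequalities on the individual subintervals $J_k$ and then sum over $k$, using that $|J_k|=h$ for every $k$ and that $\overline{\phi}_k$ is the mean of $\phi$ over $J_k$.

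For \eqref{p1} I would work one interval at a time. Fix $k$ and record, for $x\in J_k$, the identity $\phi(x)-\overline{\phi}_k=\frac1h\int_{J_k}\bigl(\phi(x)-\phi(y)\bigr)\,dy$ together with $\phi(x)-\phi(y)=\int_y^x\phi_s(s)\,ds$. Applying the Cauchy--Schwarz inequality to the inner integral yields $|\phi(x)-\phi(y)|\le\sqrt{h}\,\|\phi_x\|_{L^2(J_k)}$ uniformly for $x,y\in J_k$, and averaging in $y$ gives $|\phi(x)-\overline{\phi}_k|\le\sqrt{h}\,\|\phi_x\|_{L^2(J_k)}$. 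Squaring and integrating in $x$ over $J_k$ produces $\|\phi-\overline{\phi}_k\|_{L^2(J_k)}^2\le h^2\,\|\phi_x\|_{L^2(J_k)}^2$. Since the left-hand side of \eqref{p1} equals $\bigl(\sum_{k=1}^N\|\phi-\overline{\phi}_k\|_{L^2(J_k)}^2\bigr)^{1/2}$, summing over $k$ and using $\sum_k\|\phi_x\|_{L^2(J_k)}^2=\|\phi_x\|^2$ gives \eqref{p1}.

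For \eqref{p2} the structural point is the $L^2(J_k)$-orthogonality of the oscillation $\phi-\overline{\phi}_k$ and the constant $\overline{\phi}_k$: because $\int_{J_k}(\phi-\overline{\phi}_k)\,dx=0$, one has $\int_{J_k}|\phi|^2\,dx=h\,\overline{\phi}_k^2+\|\phi-\overline{\phi}_k\|_{L^2(J_k)}^2$. Summing over $k$ gives the exact identity $\|\phi\|^2=h\sum_{k=1}^N\overline{\phi}_k^2+\|\phi-\sum_{k=1}^N\overline{\phi}_k\chi_{J_k}\|^2$, so \eqref{p2} is equivalent to a sharp bound on the oscillation term. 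Here the crude $h^2$ from the first part is too lossy, and I would instead expand $\phi$ on each $J_k$ in the orthonormal cosine basis; the zeroth coefficient is precisely $\overline{\phi}_k$, so Parseval identifies $\|\phi-\overline{\phi}_k\|_{L^2(J_k)}^2$ with the sum over the nonconstant modes. Each such mode is controlled by the spectral gap of the Neumann Laplacian on an interval of length $h$, which furnishes the sharp Poincar\'{e}--Wirtinger constant and, after summing over $k$, the stated coefficient of $\|\phi_x\|^2$ in \eqref{p2}.

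The only genuinely delicate point is this constant in \eqref{p2}: the fundamental-theorem-of-calculus estimate used for \eqref{p1} does not capture it, and one must exploit that $\overline{\phi}_k$ is the \emph{mean} (not merely some constant) in order to annihilate the ground mode before invoking the spectral gap. I would therefore prove \eqref{p1} first by the direct pointwise estimate, and obtain \eqref{p2} independently via the orthogonal decomposition combined with the sharp one-dimensional Poincar\'{e}--Wirtinger inequality, rather than by feeding \eqref{p1} into the decomposition.
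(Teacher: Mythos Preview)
The paper does not give its own proof of this lemma; it is quoted from \cite{AzTi}. Your argument is the standard one and is correct in structure: the pointwise fundamental-theorem-of-calculus estimate yields \eqref{p1}, and the $L^2$-orthogonal split $\|\phi\|_{L^2(J_k)}^2=h\,\overline{\phi}_k^{\,2}+\|\phi-\overline{\phi}_k\|_{L^2(J_k)}^2$ reduces \eqref{p2} to the Poincar\'e--Wirtinger inequality on each $J_k$.

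There is, however, one point where you are too quick. You assert that the Neumann spectral gap ``furnishes the sharp Poincar\'e--Wirtinger constant and, after summing over $k$, the stated coefficient of $\|\phi_x\|^2$ in \eqref{p2}.'' But the first nonzero Neumann eigenvalue on an interval of length $h$ is $(\pi/h)^2$, so the sharp Poincar\'e--Wirtinger bound is $\|\phi-\overline{\phi}_k\|_{L^2(J_k)}^2\le (h/\pi)^2\|\phi_x\|_{L^2(J_k)}^2$. Your method therefore yields
\[
\|\phi\|^2\le h\sum_{k=1}^N\overline{\phi}_k^{\,2}+\Bigl(\frac{h}{\pi}\Bigr)^2\|\phi_x\|^2,
\]
not the printed $(h/(2\pi))^2$. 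In fact the constant $(h/(2\pi))^2$ cannot be correct: take $N=1$, $h=L$, $\phi(x)=\cos(\pi x/L)$; then $\overline{\phi}_1=0$, $\|\phi\|^2=L/2$, $\|\phi_x\|^2=\pi^2/(2L)$, and \eqref{p2} would read $L/2\le L/8$. So the coefficient in \eqref{p2} as printed is a misprint; your argument proves the inequality with the correct (and sharp) constant $(h/\pi)^2$, and all subsequent uses of \eqref{p2} in the paper go through with $N$ enlarged by a fixed factor.
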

By employing this Lemma, we proved the following theorem:
\begin{theorem}\label{t1} Suppose that the nonlinear term $f(\cdot)$ satisfies
the condition \eqref{F} and that $\mu$ and $N$ are large enough satisfying
\be\label{Nm} \mu \geq 2\left(\la +\frac{\delta_0b}{2}\right) \
\mbox{and} \ \
N^2>\frac{L^2}{2\nu\pi^2}\left(\la +\frac{\delta_0b}2\right),
\ee
where
 \be\label{del0}\delta_0=\frac{b}2\min\{1,\nu\}.\ee
Then each solution of the problem \eqref{kg1} satisfies the following decay estimate:
\be\label{Est1} \|\ptu(t)\|^2+\|\px{u}(t)\|^2\leq K(\|u_1\|^2+\|\px
u_0\|^2)e^{-\delta_0 t}, \ee where $K$ is some
positive constant depending on $b, \la$ and $L$.
\end{theorem}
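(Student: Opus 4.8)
The plan is to run an energy/Lyapunov argument adapted to the volume-element controller, combining two integral identities and using Lemma \ref{pr} to absorb the destabilizing terms. First I would multiply the equation in \eqref{kg1} by $\ptu$ and integrate over $(0,L)$. Integrating $-\nu\px^2 u$ by parts (the boundary contributions vanish by the Neumann conditions), every term becomes a perfect time derivative except the damping, which yields $b\|\ptu\|^2$. The decisive point is that the controller is also a perfect derivative: since $\int_{J_k}\ptu\,dx=h\frac{d}{dt}\overline{u}_k$ with $h=L/N$, one obtains
\be\label{xxx1}
\frac{d}{dt}E(t)=-2b\|\ptu\|^2,\qquad E(t):=\|\ptu\|^2+\nu\|\pxu\|^2-\la\|u\|^2+2\int_0^L\F(u)\,dx+\mu h\sum_{k=1}^N\overline{u}_k^2.
\ee
This shows $E$ is nonincreasing, but because the dissipation lives only in $\|\ptu\|^2$ it cannot by itself produce decay of $\|\pxu\|^2$.

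To create dissipation in the remaining variables I would multiply \eqref{kg1} by $u$ and integrate; using $\int_{J_k}u\,dx=h\,\overline{u}_k$ this gives
\be\label{xxx3}
\frac{d}{dt}\Big[(u,\ptu)+\tfrac b2\|u\|^2\Big]=\|\ptu\|^2-\nu\|\pxu\|^2+\la\|u\|^2-\int_0^L f(u)u\,dx-\mu h\sum_{k=1}^N\overline{u}_k^2.
\ee
I would then form the perturbed energy $\Psi(t):=E(t)+\eb\big[(u,\ptu)+\tfrac b2\|u\|^2\big]$ with $\eb$ a small multiple of $\delta_0$ (e.g.\ $\eb=2\delta_0$). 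One first checks that $\Psi$ is equivalent to $\|\ptu\|^2+\|\pxu\|^2$: discarding $\int_0^L\F(u)\,dx\ge 0$, bounding the cross term $(u,\ptu)$ by Young's inequality \eqref{yng}, and using \eqref{p2} to dominate the indefinite $\|u\|^2$ by $h\sum\overline{u}_k^2+(h/2\pi)^2\|\pxu\|^2$, the coefficients remain positive precisely under \eqref{Nm}, so $c_1(\|\ptu\|^2+\|\pxu\|^2)\le\Psi\le c_2(\|\ptu\|^2+\|\pxu\|^2)$ up to the lower-order initial data.

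The heart of the matter, and the step I expect to be the main obstacle, is the differential inequality
\be\label{xxx4}
\frac{d}{dt}\Psi(t)+\delta_0\,\Psi(t)\le 0.
\ee
Combining \eqref{xxx1} and \eqref{xxx3} and collecting terms, the nonlinearity appears as $-\eb\int_0^L f(u)u\,dx+2\delta_0\int_0^L\F(u)\,dx\le(2\delta_0-\eb)\int_0^L f(u)u\,dx\le 0$, using \eqref{F} (which gives $0\le\F(u)\le f(u)u$) together with $\eb\ge 2\delta_0$. The genuinely delicate term is the leftover $+\,C\|u\|^2$ with $C\sim\delta_0(\la+\delta_0 b)$: here I would again invoke \eqref{p2} to split it into $C\,h\sum\overline{u}_k^2+C(h/2\pi)^2\|\pxu\|^2$. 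These must be swallowed by the negative contributions $-\eb\mu h\sum\overline{u}_k^2$ and $-\eb\nu\|\pxu\|^2$, and this is exactly where \eqref{Nm} is forced: the requirement $C\le\eb\mu$ yields the lower bound on $\mu$, while $C(h/2\pi)^2\le\eb\nu$ yields the lower bound on $N^2$ (recall $h=L/N$). The surviving $\|\ptu\|^2$ coefficient $-2b+\eb+\delta_0$, together with the Young remainder of the cross term $\delta_0\eb(u,\ptu)$, stays negative because $\delta_0\le b/2$, so \eqref{xxx4} holds. Finally, Gronwall's inequality applied to \eqref{xxx4} gives $\Psi(t)\le\Psi(0)e^{-\delta_0 t}$, and the equivalence of $\Psi$ with $\|\ptu\|^2+\|\pxu\|^2$ converts this into the stated estimate \eqref{Est1}, with $K$ depending on the equivalence constants and on $b,\la,L$.
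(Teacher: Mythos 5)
Your proposal is correct and follows essentially the same route as the paper: testing against $\ptu$ and against $\eb u$ separately and summing is just a repackaging of the paper's single multiplier $\ptu+\eb u$, and your perturbed functional $\Psi$ coincides (up to an overall factor) with the paper's $\Phi_\eb$, with the same use of \eqref{p2} both to prove coercivity of the functional and to absorb the indefinite $\|u\|^2$ term, the same treatment of the nonlinearity via \eqref{F}, and the same choice of decay rate $\delta_0$ leading to Gronwall. The only differences are cosmetic bookkeeping of constants.
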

\begin{proof} First observe that one can use standard tools of the
theory of nonlinear wave equations to show global existence and
uniqueness of solution to problem \eqref{kg1} (see, e.g., \cite{Lions}).\\
Taking the $L^2(0,L)$ inner product  of \eqref{kg1} with $\ptu+\eb u$  , where $\eb>0$ is a parameter, to be determined later, gives us
the following relation:
\begin{multline}\label{kg2}\frac d{dt}\left[\frac12\|\ptu\|^2+\frac\nu 2\|\px{u}\|^2-
\frac12(\eb b -\la)\|u\|^2+\int_0^L\F(u)dx+
\frac12h\mu\sum\limits_{k=1}^N\overline{u}_k^2+\eb(u,\ptu)
\right]\\
+(b-\eb)\|\ptu\|^2+\eb\nu\|\px{u}\|^2-\eb
\la\|u\|^2+\eb(f(u),u)+\eb\mu
h\sum\limits_{k=1}^N\overline{u}_k^2=0.
\end{multline}
It follows from \eqref{kg2} that
\begin{multline}\label{kg3}
\frac d{dt} \Phi_\eb(t)+\delta \Phi_\eb(t)+ (b-\eb)\|
\ptu\|^2+\eb\nu\|\px{u}\|^2-\eb \la\|u\|^2+\eb(f(u),u)\\
+ \eb\mu
h\sum\limits_{k=1}^N\overline{u}_k^2 -\frac{\delta}2\|\ptu\|^2-
\frac{\delta\nu}2\|\px{u}\|^2-\frac\delta2(\eb b-\la)\|u\|^2-\delta
(\F(u),1)\\
-\frac\delta2h\mu \sum\limits_{k=1}^N\overline{u}_k^2-\delta
\eb(u,\ptu)=0.
\end{multline}
Here
\begin{multline*}
\Phi_\eb(t):=\frac12\|\ptu(t)\|^2+\frac\nu2\|\px{u}(t)\|^2+\frac12(\eb b
-\la)\|u(t)\|^2+\int_0^L\F(u(x,t))dx\\
+
\frac12h\mu\sum\limits_{k=1}^N\overline{u}_k^2(t)+\eb(u(t),\ptu(t)).
\end{multline*}
Due to condition \eqref{F} and the Cauchy-Schwarz inequality  we have the
following lower estimate for $\Phi_\eb(t)$
$$
\Phi_\eb(t)\geq \frac14\|\ptu\|^2+\frac\nu2\|\px
u\|^2+(\frac{b\eb}2  -\frac\la2-\eb^2)\|u\|^2+\frac12h\mu
\sum\limits_{k=1}^N\overline{u}_k^2.
$$

 By choosing  $\eb \in (0,\frac{b}2]$
 and by employing inequality \eqref{p2}, we get from the above inequality:
\begin{multline}\label{kg4}
\Phi_\eb(t)\geq \frac14\|\ptu\|^2+\frac\nu2\|\px
u\|^2-\la\left[h\sum\limits_{k=1}^N\overline{u}_k^2+\left(\frac{h}{2\pi}\right)^2\|
\px{u}\|^2\right]\\
+
\frac12\mu h \sum\limits_{k=1}^N\overline{u}_k^2
=\frac14\|\ptu\|^2+\left(\frac\nu2 -\la\frac{L^2}{4\pi^2N^2}\right)\|\px  u\|^2+h(\frac\mu2-\la)\sum\limits_{k=1}^N\overline{u}_k^2,
\end{multline}
Thanks to the condition \eqref{Nm} we also have
\begin{equation}\label{ph}
\Phi_\eb(t)\geq \frac14\|\ptu(t)\|^2+d_0\|\px
u(t)\|^2,
\end{equation}
where $d_0=\frac{\delta_0bL^2}{4N^2\nu\pi^2}$, and  $\delta_0$ is defined in \eqref{del0}.\\
Let $\delta \in (0,\eb)$, to be chosen below. According to condition \eqref{F} $f(u)u\geq \F(u), \forall u\in \R$. Therefore
 \eqref{kg3} implies
\begin{multline}\label{kg5}
\frac d{dt} \Phi_\eb(t)+\delta \Phi_\eb(t)+
\frac b2\|
\ptu\|^2+(\frac {b\nu}2-\frac\delta2)\|\px{u}\|^2\\
+\left(\frac{\delta\la}{2}-\frac{\delta b^2}4-\frac{b\la}2\right)\|u\|^2
+\mu h\left(\frac b2-\frac\delta2\right)\sum\limits_{k=1}^N\overline{u}_k^2\leq0.
\end{multline}
We choose here $\delta =\delta_0:=\frac{b}2\min\{\nu,1\}$
 and employ inequality \eqref{p2} to obtain:
\begin{multline*}
\frac d{dt} \Phi_\eb(t)+\delta_0 \Phi_\eb(t)+
\frac b2\|
\ptu\|^2+\left[\frac {b\nu}4-\frac b2\left(\la+\frac{\delta_0b}{2}\right)\frac{L^2}{4\pi^2N^2}\right]\|\px{u}\|^2\\
+\frac{bh}2\left[\frac{\mu}{2}-\left(\la+\frac{\delta_0b}{2}\right)\right]\sum\limits_{k=1}^N\overline{u}_k^2\leq0.
\end{multline*}

Finally  by using condition \eqref{Nm} we get:
$$
\frac d{dt} \Phi_\eb(t)+\delta_0 \Phi_\eb(t)\leq 0.
$$
 Thus by Gronwalls inequality and thanks   \eqref{ph} we have
\be\label{kg5} \|\ptu(t)\|^2+\|\px{u}(t)\|^2\leq K( \|u_1\|^2+\|\px
u_0\|^2)e^{-\delta_0 t}, \ee
where $\delta_0$ is defined in \eqref{del0}, and $K$ is a positive constant, depending on $b,\la$ and $L$.\vspace{0.5cm}
\end{proof}
 \subsection{ Stabilization with  feedback control on a subdomain.}

 In this section we study the problem of internal stabilization of initial boundary value problem for nonlinear damped wave equation on a bounded domain.
We show that the problem can be exponentially stabilized by a feedback controller acting on a strict subdomain. So, we consider the following feedback control problem:
\be\label{ow1}
\pt^2u-\Dx u+b\ptu-a u +|u|^{p-2}u= -\mu \chi_{\om}(x)u, \ x \in \Om,
t>0, \ee
\be\label{ow2} u(x,t)=0, \ x \in \p\Om, \ t>0,
\ee
\be\label{ow3} u(x,0)=u_0(x), \ \ptu(x,0)=u_1(x), \ x \in \Om.
\ee
Here  $a>0,p\geq 2$ are given numbers, and $\mu >0$ is a parameter to be determined, $\Om\subset \R^n$ is a bounded domain with smooth boundary $\p \Om$,
$\chi_{\om}(x)$ is the characteristic function of the  subdomain $\om
\subset \Om$ with smooth boundary and $\overline{\om} \subset \Om$.\\
 Let
us denote by  $\la_1(\mu,\Om)$  the first eigenvalue of the elliptic problem
$$
-\Dx v +\mu \chi_{\om}(x) v =\la v; \ x \in \Om, \ v=0, \ x \in \p
\Om,
$$
and denote by  $\la_1(\Om_\om)$  the first eigenvalue of the problem
$$
-\Dx v  =\la v, \ x \in \Om_\om; \ v=0, \ x \in \p \Om_\om,
$$
where $\Om_\om:=\Om \setminus \overline{\om}.$
We will need the following Lemma in the proof of the  main result of this section:
\begin{lemma}\label{lem2}
(see, e.g.,\cite{XiYo}) For each $d>0$ there exists a number
$\mu_0(d)>0$ such that the following inequality holds true
\be\label{eig1} \int_\Om\left(|\Nx v(x)|^2+\mu
\chi_\om(x)v^2(x)\right)dx\geq \left(\la_1(\Om_\om)-d\right)\int_\Om
v^2(x)dx, \ \forall v \in H^1_0(\Om),\ee whenever $\mu >\mu_0$.
\end{lemma}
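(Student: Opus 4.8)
The plan is to read the left-hand side of \eqref{eig1}, after division by $\int_\Om v^2\,dx$, as exactly the Rayleigh quotient of the operator $-\Dx+\mu\chi_\om$ under homogeneous Dirichlet conditions, so that $\la_1(\mu,\Om)=\min_{0\neq v\in H_0^1(\Om)}\big(\int_\Om|\Nx v|^2+\mu\chi_\om v^2\,dx\big)/\int_\Om v^2\,dx$. Then \eqref{eig1} is equivalent to the single scalar inequality $\la_1(\mu,\Om)\geq\la_1(\Om_\om)-d$, and it suffices to establish the large-coupling limit $\la_1(\mu,\Om)\rw\la_1(\Om_\om)$ as $\mu\rw\infty$; given $d>0$, any $\mu_0$ beyond which $\la_1(\mu,\Om)>\la_1(\Om_\om)-d$ then proves the lemma. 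I would first note that $\la_1(\mu,\Om)$ is nondecreasing in $\mu$, since for each fixed $v$ the numerator of the Rayleigh quotient increases with $\mu$.

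For the upper bound I would extend the first Dirichlet eigenfunction $\psi$ of $\Om_\om$ by zero to all of $\Om$. Because $\psi\in H_0^1(\Om_\om)$ vanishes on $\p\Om_\om=\p\Om\cup\p\om$, its zero-extension lies in $H_0^1(\Om)$ and is supported away from $\om$, so $\chi_\om\psi\equiv 0$. Using $\psi$ as a test function in the Rayleigh quotient gives $\la_1(\mu,\Om)\leq\int_{\Om_\om}|\Nx\psi|^2\,dx\big/\int_{\Om_\om}\psi^2\,dx=\la_1(\Om_\om)$, uniformly in $\mu$.

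The heart of the argument, and the step I expect to be the main obstacle, is the matching lower bound $\liminf_{\mu\rw\infty}\la_1(\mu,\Om)\geq\la_1(\Om_\om)$. Let $v_\mu\in H_0^1(\Om)$ be a normalized first eigenfunction with $\|v_\mu\|=1$ (existence and attainment follow from the compactness of the embedding $H_0^1(\Om)\hookrightarrow L^2(\Om)$). The eigenvalue identity together with the uniform upper bound yields $\int_\Om|\Nx v_\mu|^2\,dx+\mu\int_\om v_\mu^2\,dx=\la_1(\mu,\Om)\leq\la_1(\Om_\om)$, so $\{v_\mu\}$ is bounded in $H_0^1(\Om)$ while $\int_\om v_\mu^2\,dx\leq\la_1(\Om_\om)/\mu\rw 0$. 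Along a sequence $\mu_j\rw\infty$ realizing the $\liminf$, Rellich's theorem lets me extract a subsequence with $v_{\mu_j}\rightharpoonup v$ weakly in $H_0^1(\Om)$ and strongly in $L^2(\Om)$; strong convergence forces $\|v\|=1$ and $\int_\om v^2\,dx=\lim_j\int_\om v_{\mu_j}^2\,dx=0$, hence $v=0$ a.e. on $\om$.

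The delicate point is precisely that the restriction of this limit to $\Om_\om$ is an admissible competitor for $\la_1(\Om_\om)$: one must confirm that $v\in H_0^1(\Om)$ with $v=0$ a.e. on $\om$ actually restricts to a function in $H_0^1(\Om_\om)$, i.e. has vanishing trace on $\p\om$ as well as on $\p\Om$, which is where the smoothness of $\p\om$ is used. Granting this, dropping the nonnegative penalty term and invoking weak lower semicontinuity of the Dirichlet seminorm gives $\liminf_j\la_1(\mu_j,\Om)\geq\liminf_j\int_\Om|\Nx v_{\mu_j}|^2\,dx\geq\int_\Om|\Nx v|^2\,dx=\int_{\Om_\om}|\Nx v|^2\,dx\geq\la_1(\Om_\om)\int_{\Om_\om}v^2\,dx=\la_1(\Om_\om)$, using the variational characterization of $\la_1(\Om_\om)$ and $\int_{\Om_\om}v^2\,dx=1$. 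Combining this lower bound with the upper bound of the second paragraph yields $\la_1(\mu,\Om)\rw\la_1(\Om_\om)$, and the lemma follows.
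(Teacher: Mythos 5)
The paper does not actually prove this lemma; it is quoted from the reference \cite{XiYo} without proof, so there is no in-paper argument to match yours against. Your proposal is a correct, self-contained proof by the standard large-coupling-limit argument, and all the essential points are in place: the reformulation of \eqref{eig1} as the scalar inequality $\la_1(\mu,\Om)\geq \la_1(\Om_\om)-d$ via the Rayleigh quotient of $-\Dx+\mu\chi_\om$, monotonicity in $\mu$, the uniform upper bound $\la_1(\mu,\Om)\leq\la_1(\Om_\om)$ obtained by extending the first eigenfunction of $\Om_\om$ by zero (legitimate since it lies in $H_0^1(\Om_\om)$ and hence its zero extension is in $H_0^1(\Om)$ with $\chi_\om\psi\equiv0$), and the compactness lower bound. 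You also correctly isolate the one genuinely delicate step, namely that a weak limit $v\in H_0^1(\Om)$ vanishing a.e.\ on $\om$ restricts to an element of $H_0^1(\Om_\om)$; with $\p\om$ Lipschitz (here smooth) this follows because the interior and exterior traces of an $H^1(\Om)$ function on $\p\om$ coincide and the interior trace of $v|_\om\equiv0$ is zero. Note that the uniform upper bound is not merely cosmetic: it is what gives the $H^1$ bound on the normalized eigenfunctions and the decay $\int_\om v_\mu^2\,dx\leq \la_1(\Om_\om)/\mu$, so the two halves of your argument are genuinely interlocked. In short, you have supplied a complete proof of a statement the paper only cites, and you prove slightly more than is needed (the full convergence $\la_1(\mu,\Om)\rw\la_1(\Om_\om)$ rather than just the one-sided bound for large $\mu$).
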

\begin{theorem}\label{intt}
Suppose that
\be\label{int1}
\la_1(\Om_\om)\geq 4a+\frac{3b^2}2, \ \ \mbox{and} \ \ \mu >\mu_0,
\ee
where $\mu_0$ is the parameter stated  in Lemma \ref{lem2} , corresponding to $d=\frac12\la_1(\Om_\om)$.
Then the energy norm of each weak solution of the problem \eqref{ow1}-\eqref{ow3}
 tends to zero with an exponential rate. More precisely, the following estimate holds true:
\be\label{Enest}
\|\ptu(t)\|^2+\|\Nx u(t)\|^2+\|u(t)\|_{L^p(\Om)}^p\leq C_0 e^{-\frac b2
t},
\ee
where $C_0$ is a positive constant depending on initial data.
\end{theorem}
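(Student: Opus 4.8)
The plan is to run a Lyapunov argument of exactly the same flavour as in the proof of Theorem~\ref{t1}, the role of Lemma~\ref{pr} now being played by the spectral inequality \eqref{eig1} of Lemma~\ref{lem2}. Observe first that $f(u)=|u|^{p-2}u$ satisfies \eqref{F} with $\F(u)=\frac1p|u|^p$, so the abstract structure is available. Testing \eqref{ow1} in $L^2(\Om)$ with $\ptu+\eb u$, where $\eb>0$ is a parameter to be fixed, and using the homogeneous Dirichlet condition \eqref{ow2} to integrate the Laplacian by parts, I would obtain the differential identity
\begin{equation*}
\frac{d}{dt}\Phi_\eb(t)+(b-\eb)\|\ptu\|^2+\eb\Big(\|\Nx u\|^2+\mu\int_\om u^2\,dx\Big)-\eb a\|u\|^2+\eb\|u\|_{L^p(\Om)}^p=0,
\end{equation*}
where
\begin{equation*}
\Phi_\eb(t):=\tfrac12\|\ptu\|^2+\tfrac12\|\Nx u\|^2-\tfrac a2\|u\|^2+\tfrac1p\|u\|_{L^p(\Om)}^p+\tfrac\mu2\int_\om u^2\,dx+\eb(\ptu,u)+\tfrac{\eb b}2\|u\|^2 .
\end{equation*}

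The first substantial step is coercivity of $\Phi_\eb$. After bounding the indefinite cross term by Young's inequality, $\eb|(\ptu,u)|\le\frac14\|\ptu\|^2+\eb^2\|u\|^2$, the only dangerous contribution is the negative multiple of $\|u\|^2$ coming from the anti-damping term $-au$ and from this Young estimate. Here I would invoke \eqref{eig1} with $d=\frac12\la_1(\Om_\om)$ and $\mu>\mu_0$, which gives $\|\Nx u\|^2+\mu\int_\om u^2\,dx\ge\frac12\la_1(\Om_\om)\|u\|^2$; spending a fixed fraction of $\|\Nx u\|^2+\mu\int_\om u^2$ on this lower bound and choosing, for instance, $\eb=\frac b2$, condition \eqref{int1} (which in particular forces $\la_1(\Om_\om)\ge 4a$) makes the resulting coefficient of $\|u\|^2$ nonnegative, so that $\Phi_\eb(t)\ge c_1\big(\|\ptu\|^2+\|\Nx u\|^2+\|u\|_{L^p(\Om)}^p\big)$ for some $c_1>0$. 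The reverse, upper, bound $\Phi_\eb(0)\le C_0$ in terms of the initial data is automatic from the Poincar\'e inequality \eqref{pnk}.

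The second step is the decay of $\Phi_\eb$. I would compute $\frac{d}{dt}\Phi_\eb+\frac b2\Phi_\eb$ from the identity above; the term $\eb\|u\|_{L^p(\Om)}^p$ has a favourable sign because $p\ge2$ forces the coefficient $-\frac b2\big(1-\frac1p\big)$ of $\|u\|_{L^p(\Om)}^p$ to be nonpositive, so it may simply be discarded. The new cross term $(\ptu,u)$ produced by differentiating $\eb(\ptu,u)$ is again absorbed by Young's inequality into the available $(b-\eb)\|\ptu\|^2$, and the leftover positive multiple of $\|u\|^2$ is dominated, once more through \eqref{eig1}, by a fraction of $-\eb\big(\|\Nx u\|^2+\mu\int_\om u^2\big)$. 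The spectral gap hypothesis \eqref{int1} is exactly what guarantees that this absorption closes, yielding $\frac{d}{dt}\Phi_\eb(t)+\frac b2\Phi_\eb(t)\le0$.

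Granting these two steps, Gronwall's inequality gives $\Phi_\eb(t)\le\Phi_\eb(0)e^{-\frac b2 t}$, and combining with the coercivity and upper bounds yields \eqref{Enest} with $C_0$ depending on $\|u_1\|^2+\|\Nx u_0\|^2+\|u_0\|_{L^p(\Om)}^p$. I expect the main obstacle to be the simultaneous treatment of the two sign-indefinite terms, the anti-damping $+\eb a\|u\|^2$ and the cross term $\eb(\ptu,u)$, in both the coercivity and the decay estimates: the control $\mu\chi_\om u$ enters only through the combination $\|\Nx u\|^2+\mu\int_\om u^2$, so the whole argument hinges on calibrating $\eb$ and the fractions of this quantity allocated to the coercivity bound versus the dissipation bound, so that the single threshold $\la_1(\Om_\om)\ge 4a+\frac{3b^2}2$ suffices for both. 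A preliminary reduction to smooth (e.g. Galerkin) approximations is needed to justify these energy computations before passing to weak solutions by density.
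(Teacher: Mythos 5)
Your proposal is correct and follows essentially the same route as the paper: the multiplier $\ptu+\frac b2 u$ (your $\eb=\frac b2$), the same Lyapunov functional including the cross term and the $\frac{b^2}4\|u\|^2$ correction, coercivity and decay both closed by invoking Lemma \ref{lem2} with $d=\frac12\la_1(\Om_\om)$ under the threshold \eqref{int1}, and Gronwall to conclude. No substantive differences to report.
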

\begin{proof}
Taking the $L^2(\Om)$ inner product of \eqref{ow1} with $\ptu+\frac b2 u$ we get
\be\label{ow5} \frac d{dt} E_b(t)+ \frac b2 \left[\|\ptu\|^2 +
\|\Nx u\|^2-a \|u\|^2+ \|u\|_{L^p(\Om)}^p+\mu
\int_{\Om}\chi_\om(x)u^2(x)dx\right]=0,
 \ee
where
\begin{multline*}
E_b(t):=\frac12\|\ptu\|^2 +\frac12\|\Nx u\|^2-\frac a2\|u\|^2+\frac1p
\|u\|_{L^p(\Om)}^p+\frac\mu2\int_{\Om}\chi_\om
(x)u^2(x)dx\\
+\frac b2(u,\ptu)+\frac{b^2}{4}\|u\|^2.
\end{multline*}
By Cauchy-Schwarz and Young inequalities we have
\be\label{csin}
\frac b 2|(u,\ptu)|\leq \frac14\|\ptu\|^2+\frac{b^2}4\|u\|^2.
\ee
By employing \eqref{csin}, we obtain the lower estimate for $E_b(t)$:
\be\label{Eint}
E_b(t)\geq \frac14\|\ptu\|^2 +\frac14\|\Nx u\|^2+\frac1p
\|u\|_{L^p(\Om)}^p+\frac14\left[\|\Nx u\|^2+2\mu \|u\|^2_{L^2(\om)}-2a \|u\|^2\right].
\ee
According to Lemma \ref{lem2}
\be\label{pin1}
\|\Nx u\|^2+\mu  \|u\|^2_{L^2(\om)}\geq \frac12\la_1(\Om_\om)\| u\|^2, \ \ \mbox{for every} \ \  \mu >\mu_0.
\ee
Thus, thanks to condition \eqref{int1}, we have
\be\label{int2}
E_b(t)\geq \frac14\|\ptu\|^2 +\frac14\|\Nx u\|^2+\frac1p
\|u\|_{L^p(\Om)}^p.
\ee
Adding to the left-hand side of \eqref{ow5} the expression $E_b(t)-\delta E_b(t)$ with some $\delta>0$ (to be chosen below),
we get
\begin{multline*}
\frac d{dt} E_b(t)+ \delta E_b(t)+\frac12(b-\delta)\|\ptu\|^2 +\frac12(b-\delta)\|\Nx {u}\|^2
\\
+\frac12(a\delta-ab-\frac{\delta b^2}2)\|u\|^2+\frac{\mu}2(b-\delta)\|u\|^2_{L^2(\om)}-\frac12\delta b(u,\ptu)=0
\end{multline*}
We use here  the inequality
$$
\frac12\delta b|(u,\ptu)|\leq \frac b4\|\ptu\|^2+\frac{b\delta^2}{4}\|u\|^2,
$$
then in the resulting inequality we choose $\delta=\frac b2$, and get:
$$
\frac d{dt} E_b(t)+ \frac b2 E_b(t)+\frac b4\left[\|\Nx {u}\|^2+\mu\|u\|^2_{L^2(\om)}-(2a+\frac34b^2)\|u\|^2 \right].
$$

Finally, by using the condition \eqref{int1}, thanks to Lemma \ref{lem2} we obtain
$$
\frac d{dt} E_b(t)+\frac  b2E_b(t)\leq 0.
$$
Integrating the last inequality and taking into account \eqref{int2}, we
arrive at the desired estimate \eqref{Enest}.

 \end{proof}

 \vspace{0.5cm} {\bf 3. Stabilization employing finitely
many Fourier modes feedback controls.}
In this section we consider the feedback control problem for damped nonlinear wave equation  based on finitely many Fourier modes,
 i.e.  we consider the  feedback system of the following form:
\be\label{fo1} \pt^2u-\nu \Dx u +b\ptu-au+|u|^{p-2}u=-\mu
\sum_{k=1}^N(u,w_k)w_k, \ x \in \Om, t>0, \ee
\be\label{fo2} u=0, \
x \in \p \Om, \ t>0, \ee
\be\label{fo2a} u(x,0=u_0(x), \ \pt u(x,0)=u_1(x), \ \
x \in \Om, \ t>0. \ee
Here $\nu>0,a>0,b>0,\mu>0,p\geq 2$ are
given numbers;  $w_1,w_2,..., w_n,...$ is the set of orthonormal (in
$L^2(\Om)$) eigenfunctions  of the Laplace operator $-\Dx$ under the
homogeneous Dirichlet boundary condition, corresponding to
eigenvalues $0<\la_1\leq \la_2 \cdots \leq \la_n\leq, \cdots$.

\begin{theorem}\label{TF} Suppose that $\mu$ and $N$  are large enough such that
\be\label{cF1} \nu \geq (2a+3b^2/4)\la_{N+1}^{-1}, \ \ \mbox{and} \ \  \mu \geq a+3b^2/4. \ee
 Then the following decay estimate holds true
  \be\label{estW}
\|\ptu(t)\|^2+\|\Nx u(t)\|^2+\int_\Om |u(x,t)|^pdx\leq E_0e^{-\frac b2
t}, \ee
where
$$
E_0:=\frac12\|u_1\|^2+\frac \nu 2\|\Nx u_0\|^2\\
+(\frac{b^2}4-\frac
a2)\|u_0\|^2+ \frac1p\int_\Om |u_0(x)|^pdx
+\frac\mu2\sum_{k=1}^N(u_0,w_k)^2+\frac { b}2(u_0,u_1).
$$
\end{theorem}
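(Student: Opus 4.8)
The plan follows the same energy-method template that succeeded in Theorem 2.3 (the Fourier-mode analog here) and in Theorem 2.2. I want to construct a modified energy functional $E_0(t)$ — identical in form to the $E_0$ displayed in the statement but with the time-dependent solution inserted — by taking the $L^2(\Om)$ inner product of equation \eqref{fo1} with the multiplier $\ptu + \frac b2 u$. The choice of the shift $\frac b2$ is exactly what produces the decay rate $e^{-\frac b2 t}$ in \eqref{estW}, so I expect it to be the right multiplier here as well. The term $(\ptu,\ptu)$ gives $\frac12\frac{d}{dt}\|\ptu\|^2$, the term $-\nu(\Dx u,\ptu)$ integrates by parts to $\frac\nu2\frac{d}{dt}\|\Nx u\|^2$, the nonlinearity $(|u|^{p-2}u,\ptu)$ yields $\frac1p\frac{d}{dt}\int_\Om|u|^p\,dx$, and the feedback term $\mu\sum_{k=1}^N(u,w_k)(w_k,\ptu)$ gives $\frac\mu2\frac{d}{dt}\sum_{k=1}^N(u,w_k)^2$ since the $w_k$ are time-independent. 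The cross terms coming from the $\frac b2 u$ part of the multiplier assemble into the $\frac b2(u,\ptu)$ contribution in $E_0$ together with residual lower-order terms.

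Having produced a differential identity of the form $\frac{d}{dt}E_0(t) + (\text{dissipative terms}) = 0$, I would next establish two inequalities in parallel, mirroring \eqref{Eint}--\eqref{int2} and the final computation of Theorem 2.2. First, a coercivity (lower bound) estimate: using Cauchy--Schwarz and Young on the indefinite term $\frac b2(u,\ptu)$, I want to absorb it into $\frac14\|\ptu\|^2$ plus a multiple of $\|u\|^2$, leaving
\be\label{lb}
E_0(t)\geq \tfrac14\|\ptu\|^2+\tfrac\nu2\|\Nx u\|^2+\tfrac1p\int_\Om|u|^p\,dx-\(\tfrac a2-\tfrac{b^2}4\)\|u\|^2,
\ee
and then I would control the negative $\|u\|^2$ term by splitting $\|\Nx u\|^2$ using the spectral gap. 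The crucial tool replacing Lemma 2.5 is the elementary spectral inequality for the Fourier projection: since $w_1,\dots,w_N$ are the first $N$ eigenfunctions, the complementary part of $u$ satisfies $\la_{N+1}\|u-\sum_{k=1}^N(u,w_k)w_k\|^2 \leq \|\Nx u\|^2$, while the controller supplies $\mu\sum_{k=1}^N(u,w_k)^2$; combining these under the hypotheses \eqref{cF1} converts $\nu\|\Nx u\|^2+\mu\sum_{k=1}^N(u,w_k)^2$ into control of $(2a+\tfrac32 b^2)\|u\|^2$, which dominates the bad terms and yields a clean lower bound $E_0(t)\geq c(\|\ptu\|^2+\|\Nx u\|^2+\int_\Om|u|^p\,dx)$.

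Second, I would add the quantity $\frac b2 E_0(t)-\frac b2 E_0(t)$ to the energy identity and, after choosing $\delta=\frac b2$ exactly as in Theorem 2.2, absorb the indefinite cross term $\frac{\delta b}2(u,\ptu)$ via Young's inequality so that the surviving zeroth-order term is proportional to $\|\Nx u\|^2+\mu\sum_{k=1}^N(u,w_k)^2-(2a+\tfrac34 b^2)\|u\|^2$, whence the same spectral inequality together with \eqref{cF1} makes this nonnegative. That produces the differential inequality $\frac{d}{dt}E_0(t)+\frac b2 E_0(t)\leq 0$; Gronwall's inequality then gives $E_0(t)\leq E_0(0)e^{-\frac b2 t}$, and the coercivity bound converts this into \eqref{estW}. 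The main obstacle is bookkeeping: I must verify that the two places where the constant $2a+\tfrac32 b^2$ (coercivity) versus $2a+\tfrac34 b^2$ (dissipation) appear are both covered by the single pair of hypotheses \eqref{cF1}, i.e. that the spectral-gap factor $\nu\la_{N+1}\geq 2a+\tfrac34 b^2$ and the controller weight $\mu\geq a+\tfrac34 b^2$ are simultaneously sharp enough; getting the two Young-inequality splittings to line up so that neither estimate is wasted is where the delicacy lies, exactly as in the parabolic and subdomain cases.
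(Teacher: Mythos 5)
Your proposal follows essentially the same route as the paper's proof: the multiplier $\ptu+\frac b2 u$, the same modified energy $E_b(t)$, the Poincar\'e-like spectral inequality \eqref{QN} to convert $\nu\|\Nx u\|^2+\mu\sum_{k=1}^N(u,w_k)^2$ into control of the indefinite $\|u\|^2$ terms in both the coercivity bound and the dissipation inequality, the choice $\delta=\frac b2$, and Gronwall. The bookkeeping you flag as the remaining delicacy is exactly what the paper carries out, so the plan is sound.
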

\begin{proof}
Multiplication of
\eqref{fo1} by $\ptu+\frac b2 u$  and integration over $\Om$ gives
\begin{equation}\label{fo3}
\frac d{dt} E_b(t)+ \frac b2\left[ \|\ptu\|^2+  \nu \|\Nx
u\|^2-a\|u(t)\|^2\\
+\int_{\Om}|u|^pdx+\mu \sum_{k=1}^N(u,w_k)^2\right]=0,
\end{equation} where
\begin{multline*}
E_b(t):=\frac12\|\ptu(t)\|^2+\frac \nu 2\|\Nx u(t)\|^2\\
+(\frac{b^2}4-\frac
a2)\|u(t)\|^2+ \frac1p\int_\Om |u(x,t)|^pdx
+\frac\mu2\sum_{k=1}^N(u(t),w_k)^2+\frac { b}2(u(t),\ptu(t)).
\end{multline*}
Thanks to the Cauchy-Schwarz and Young inequalities we have
$$
\frac { b}2|(u(t),\ptu(t))|\leq \frac14\|\ptu\|^2+\frac {b^2}4\|u\|^2.
$$
Consequently,
$$
E_b(t)\geq \frac14\|\ptu\|^2+\frac \nu 2\|\Nx{u}\|^2-\frac a2\|u\|^2+
\frac1p\int_\Om |u|^pdx +\frac\mu2\sum_{k=1}^N(u,w_k)^2.
$$

Since
$$
-\frac a2 \|u\|^2+\frac\mu2\sum_{k=1}^N(u,w_k)^2=\frac12(\mu-a)
\sum_{k=1}^N(u,w_k)^2-\frac a2\sum_{k=N+1}^\infty(u,w_k)^2,
$$
by using  the Poincar\'{e} -like inequality
 \be\label{QN}
\|\phi-\sum_{k=1}^N(\phi,w_k)w_k\|^2\leq \la_{N+1}^{-1}\|\Nx
\phi\|^2, \ee
 which is valid for each $\phi\in
H^1_0(\Om)$, we get
 \be\label{fo4} E_b(t)\geq
\frac14\|\ptu\|^2+\frac12\left(\nu-a\la_{N+1}^{-1}\right)\|\Nx
u\|^2+\frac12(\mu -a)\sum_{k=1}^N(u,w_k)^2+\frac1p\int_\Om |u|^pdx.
\ee
Thus,
\be\label{ph1}
 E_b(t)\geq \frac14\|\ptu\|^2+\frac\nu 4\|\Nx{u}\|^2+
\frac1p\int_\Om |u|^pdx.
\ee

Adding to the left-hand side of \eqref{fo3} the expression $\delta E_b(t)-\delta E_b(t)$ (with $\delta$ to be chosen later), we obtain

\begin{multline*}
\frac d{dt} E_b(t)+ \delta E_b(t) +\frac12(b-\delta ) \|\ptu\|^2+\frac \nu2(b-\delta)\|\Nx u\|^2\\
+\left(-\frac{ba}2+\frac{\delta a}2-\frac{\delta b^2}4\right)\|u^2\|+\left(\frac b2-\frac \delta2\right)\|u\|^p_{L^p(\Om)}\\
+\frac \mu2(b-\delta)\sum\limits_{k=1}^N(u,w_k)^2-\frac12b\delta(u,\ptu)=0.
\end{multline*}
We choose here $\delta =\frac b2$, and obtain
\begin{multline}\label{bbb}
\frac d{dt} E_b(t)+ \delta E_b(t) +\frac b4 \|\ptu\|^2+\frac {\nu b}4\|\Nx u\|^2\\
-\left(\frac{ba}4+\frac{ b^3}8\right)\|u\|^2
+\frac {\mu b}4\sum\limits_{k=1}^N(u,w_k)^2-\frac{b^2}4(u,\ptu)=0.
\end{multline}
Employing the inequality
$$
\frac{b^2}4|(u,\ptu)|\leq \frac b4\|\ptu\|^2+\frac{b^3}{16}\|u\|^2,
$$
and inequality \eqref{QN}, we obtain from \eqref{bbb}:
\begin{multline}\label{bbb}
\frac d{dt} E_b(t)+ \delta E_b(t) +\frac { b}4\left[\nu -(a+3b^2/4)\la_{N+1}^{-1}\right]\|\Nx u\|^2\\
+\frac { b}4\left[\mu -(a+3b^2/4)\right]\sum\limits_{k=1}^N(u,w_k)^2.
\end{multline}

\begin{multline}\label{del3}
\frac d{dt} E_b(t)+ \delta E_b(t)+\left(\frac b2-\frac \delta2\right)\|\ptu\|^2+\left(\frac b2(\nu-a\la_{N+1}^{-1})- \frac{\delta \nu}2\right)\|\Nx u\|^2+\\
\left(\frac b2(\mu-a)-\frac{\delta \mu}2\right)\sum_{k=1}^N(u,w_k)^2+\left(\frac b2-\frac\delta p\right)\int_{\Om}|u|^pdx\\
-\delta\left(\frac{b^2}4-\frac a2\right)\|u\|^2
+\frac{\delta b}2(u,\ptu)\leq 0.
\end{multline}
By using the  inequalities
$$
\frac{\delta b}2|(u,\ptu)|\leq \frac b4\|\ptu\|^2+\frac{\delta^2 b}{4}\|u\|^2,
$$
and \eqref{QN} in \eqref{del3} we get
\begin{multline*}
\frac d{dt} E_b(t)+ \delta E_b(t)+\left(\frac b4-\frac \delta2\right)\|\ptu\|^2+\left(\frac b2(\nu-a\la_{N+1}^{-1})- \frac{\delta^2b}4\la_{N+1}^{-1}-\frac{\delta \nu}2\right)\|\Nx u\|^2+\\
\left(\frac b2(\mu-a)-\frac{\delta \mu}2\right)\sum_{k=1}^N(u,w_k)^2+\left(\frac b2-\frac\delta p\right)\int_{\Om}|u|^pdx-\delta\left(\frac{b^2}4-\frac a2-\frac{\delta^2 b}4\right)\|u\|^2
\leq 0.
\end{multline*}

By choosing  $\delta=\frac b2$ we obtain :
$$
\frac d{dt} E_b(t)+ \frac b2 E_b(t)+\frac b2\left[\nu -(\frac{b^2}8
+\frac 32a)\la_{N+1}^{-1}\right]\|\Nx
u\|^2+
\frac b2\left[\frac \mu 2-\frac32a-\frac{b^2}{8}\right]\sum_{k=1}^N(u,w_k)^2\leq 0.
$$
Taking into account  conditions \eqref{cF1} we  deduce from the last
inequality the inequality
$$
\frac d{dt} E_b(t)+ \frac b2 E_b(t)\leq 0.
$$
Integrating the last inequality we get the desired estimate \eqref{estW} thanks to \eqref{ph1}.
\end{proof}

\begin{remark}
We would like to note that  estimate \eqref{estW} allows us  to prove
existence of a weak solution to the problem \eqref{fo1}, \eqref{fo2}
such that (see \cite{Lions})
$$
u\in L^{\infty}\left(\R^+;H^1_0(\Om)\cap L^{p}(\Om)\right), \ u\in
L^{\infty}\left(\R^+; L^{2}(\Om)\right)
$$
Note that there are no restrictions on the spatial dimension of the domain $\Om$ or
the growth of nonlinearity.
\end{remark}

\section{Nonlinear Wave Equation with Nonlinear Damping term:Stabilization with finitely many Fourier
modes}
In this section we consider the initial boundary value problem for a nonlinear wave equation with nonlinear damping term with a
feedback controller involving finitely many Fourier modes:

\be\label{nfo1} \partial_t^2u-\nu \Dx u +b|\ptu|^{m-2}\ptu-au+|u|^{p-2}u=-\mu
\sum_{k=1}^N(u,w_k)w_k, \ x \in \Om, t>0, \ee
\be\label{nfo2} u=0, \
x \in \p \Om, \ t>0,
\ee
\be\label{nfo2a} u(x,0)=u_0(x), \ \ \ptu(x,0)=u_1(x), \ \
x \in \Om, \
\ee
where $\nu>0,b>0,a>0, p\geq m>2$ are given
parameters.\\
We show stabilization of solutions of this problem  to the zero  with a polynomial rate. Our main result is the following theorem:
\begin{theorem}\label{nTF} Suppose that $\mu$ ad $N$ are large  enough such that
\be\label{ncF1} \nu >2a\la_{N+1}^{-1}, \ \ \mbox{and} \ \ \mu
>a. \ee
 Then  for each solution of the problem \eqref{nfo1}-\eqref{nfo2a} following estimate holds true
\be\label{nestW}
\|\ptu(t)\|^2+\|\Nx u(t)\|^2+\int_\Om |u(x,t)|^pdx\leq C
t^{-\frac{m-1}{m}},
 \ee
where $C$ is a positive constant depending on initial data.
\end{theorem}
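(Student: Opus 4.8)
The plan is to run an energy estimate in the spirit of the previous proofs, but adapted to the nonlinear damping $b|\ptu|^{m-2}\ptu$, which produces only an $L^m$-type dissipation and hence only polynomial decay. First I would take the $L^2(\Om)$ inner product of \eqref{nfo1} with $\ptu$ to get the energy identity $\frac{d}{dt}E(t)=-b\int_\Om|\ptu|^m\,dx$, where $E(t):=\frac12\|\ptu\|^2+\frac\nu2\|\Nx u\|^2-\frac a2\|u\|^2+\frac1p\int_\Om|u|^p\,dx+\frac\mu2\sum_{k=1}^N(u,w_k)^2$. Using the orthogonal splitting $-\frac a2\|u\|^2+\frac\mu2\sum_{k=1}^N(u,w_k)^2=\frac{\mu-a}2\sum_{k=1}^N(u,w_k)^2-\frac a2\sum_{k>N}(u,w_k)^2$ together with the Poincar\'e-like inequality \eqref{QN} and the hypotheses \eqref{ncF1} (that is $\nu>2a\la_{N+1}^{-1}$ and $\mu>a$), I would show that $E(t)$ is coercive, $E(t)\ge c\big(\|\ptu\|^2+\|\Nx u\|^2+\int_\Om|u|^p\,dx\big)$, and bounded above by the same quantity; since $E$ is non-increasing it then suffices to prove $E(t)\le Ct^{-(m-1)/m}$.

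The essential difficulty, and the reason one cannot expect exponential decay as in Theorems \ref{t1}, \ref{intt} and \ref{TF}, is that for $m>2$ the dissipation $\int_\Om|\ptu|^m\,dx$ does not control the kinetic energy linearly: H\"older's inequality gives only $\|\ptu\|^2\le|\Om|^{1-2/m}\big(\int_\Om|\ptu|^m\,dx\big)^{2/m}$, and the exponent $2/m<1$ prevents absorbing this term into the dissipation when the velocity is small. Consequently a perturbed energy $G:=E(t)+\eb(u,\ptu)$ only yields, after testing with $\ptu+\eb u$, a differential inequality of the form $\frac{d}{dt}G+c\eb E\le C\eb^{m/(m-2)}$, which does not close on its own; the polynomial rate must instead be extracted by integrating over unit time intervals.

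To recover the potential energy I would use the equipartition identity obtained by testing \eqref{nfo1} with $u$, namely $\frac{d}{dt}(\ptu,u)=\|\ptu\|^2-\big[\nu\|\Nx u\|^2-a\|u\|^2+\int_\Om|u|^p\,dx+\mu\sum_{k=1}^N(u,w_k)^2\big]-b\int_\Om|\ptu|^{m-2}\ptu\,u\,dx$, whose bracket is, by the same splitting, bounded below by $c\big(\|\Nx u\|^2+\int_\Om|u|^p\,dx\big)$. Integrating the energy identity over $[t,t+1]$ gives $\int_t^{t+1}\!\int_\Om|\ptu|^m=\frac1b\big(E(t)-E(t+1)\big)=:\frac1b\,\eta(t)$. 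By the mean value theorem I would select $t_1\in[t,t+\frac14]$ and $t_2\in[t+\frac34,t+1]$ at which $\int_\Om|\ptu(t_i)|^m\le 4\eta(t)/b$, so that $\|\ptu(t_i)\|\le C\eta(t)^{1/m}$ and hence the boundary terms satisfy $|(\ptu,u)(t_i)|\le C\eta(t)^{1/m}E(t)^{1/2}$. The cross term I would estimate by H\"older with exponents $m/(m-1)$ and $m$ and by $\|u\|_{L^m(\Om)}\le C\|u\|_{L^p(\Om)}$ (legitimate since $p\ge m$ on a bounded domain), obtaining $\int_{t_1}^{t_2}\big|\int_\Om|\ptu|^{m-2}\ptu\,u\big|\le CE(t)^{1/p}\eta(t)^{(m-1)/m}$, while $\int_{t_1}^{t_2}\|\ptu\|^2\le C\eta(t)^{2/m}$ after a further H\"older step in time. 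Integrating the equipartition identity over $[t_1,t_2]$, assembling these bounds, and using $\int_{t_1}^{t_2}E\,ds\ge\frac12 E(t+1)$ yields a recursive inequality $E(t+1)\le C\big(\eta(t)^{2/m}+\eta(t)^{1/m}E(t)^{1/2}+E(t)^{1/p}\eta(t)^{(m-1)/m}\big)$.

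Finally I would convert this recursion into \eqref{nestW} by a Nakao-type argument: after using Young's inequality and the boundedness $E(t)\le E(0)$ to reduce the right-hand side to a single power of $\eta(t)=E(t)-E(t+1)$, one reaches an inequality $E(t)^{1+r}\le C\big(E(t)-E(t+1)\big)$ with the exponent $r$ dictated by the interpolation powers above, and the classical Nakao lemma then gives $E(t)\le C(1+t)^{-1/r}$, which with the coercivity of $E$ produces \eqref{nestW}. I expect the main obstacle to be precisely the non-absorbable kinetic term isolated in the second paragraph: essentially all the work goes into compensating for it through the equipartition identity, the mean-value selection of $t_1,t_2$, and the bookkeeping of the space–time H\"older exponents, since it is this bookkeeping that fixes the polynomial exponent $\frac{m-1}{m}$ in \eqref{nestW}. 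A second delicate point is controlling the damping-times-$u$ cross term $\int_\Om|\ptu|^{m-2}\ptu\,u$ without imposing a restriction on the spatial dimension, which is why I would route the estimate through $\|u\|_{L^m(\Om)}\le C\|u\|_{L^p(\Om)}$ using $p\ge m$ rather than through a Sobolev embedding.
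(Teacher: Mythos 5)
Your first half coincides with the paper's proof: testing \eqref{nfo1} with $\ptu$ to get $\frac{d}{dt}\Cal E(t)=-b\|\ptu\|_{L^m}^m$, proving coercivity of $\Cal E$ from the orthogonal splitting, \eqref{QN} and \eqref{ncF1}, and then testing with $u$ to obtain the equipartition identity and the bound $\Cal E(t)\leq -\frac{d}{dt}(u,\ptu)+\frac32\|\ptu\|^2+b\int_\Om|u|\,|\ptu|^{m-1}dx$, with the kinetic and cross terms handled by exactly the space--time H\"older estimates you describe (including the use of $p\geq m$ to control $\|u\|_{L^m}$ by $\|u\|_{L^p}$). Where you diverge is the closing step. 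The paper does \emph{not} use a Nakao difference inequality on unit intervals: it integrates the displayed inequality over the whole interval $(0,t)$, uses that the \emph{total} dissipation $\int_0^t\|\ptu\|_{L^m}^md\tau\leq \Cal E(0)/b$ is bounded by a constant independent of $t$ (not merely equal to a local energy drop), so that H\"older in time gives $\int_0^t\|\ptu\|^2d\tau\leq Ct^{(m-2)/m}$ and $\int_0^t\int_\Om|u|\,|\ptu|^{m-1}\leq Ct^{1/m}$, and then invokes monotonicity of $\Cal E$ in the form $t\,\Cal E(t)\leq\int_0^t\Cal E(\tau)d\tau$ to divide by $t$ and land on $\Cal E(t)\leq Ct^{-(m-1)/m}$. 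The global-in-time integration is what makes the right-hand side sublinear in $t$ and is the source of the specific exponent.

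The genuine concern with your route is the exponent. Running your own recursion with $E(t)\leq E(0)$, the smallest power of $\eta(t)=E(t)-E(t+1)$ on the right-hand side is $\eta^{1/m}$ (from the boundary terms $(\ptu,u)(t_i)$), so the cleanest closure is $E(t+1)^m\leq C\bigl(E(t)-E(t+1)\bigr)$, and Nakao's lemma then yields $E(t)\leq C(1+t)^{-1/(m-1)}$, which equals $(m-1)/m$ only for isolated values of $m$; a more careful Young-inequality absorption changes the exponent again but still does not obviously produce $(m-1)/m$. So either you must carry out the bookkeeping and state whatever rate your method actually delivers (which may be better or worse than the paper's depending on $m$), or you should replace the unit-interval/Nakao step by the paper's device: integrate over $(0,t)$, exploit the uniform bound on the cumulative dissipation, and use $t\,\Cal E(t)\leq\int_0^t\Cal E(\tau)d\tau$. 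As written, the claim that "the bookkeeping fixes the polynomial exponent $\frac{m-1}{m}$" is asserted rather than demonstrated, and it is precisely the step most likely to fail.
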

\begin{proof}
Taking the inner product of equation \eqref{nfo1} in $L^2(\Om)$ with $\ptu$ we get
\be\label{nfo3}
\frac d{dt} \Cal E(t)+  b\|\ptu(t)\|^m=0,
\ee
where
\be\label{nd1}
\Cal E(t):=\frac12\|\ptu(t)\|^2 +\frac12\|\Nx u(t)\|^2-\frac
a2\|u(t)\|^2+\frac1p \|u(t)\|^p_{L^p(\Om)}+\frac\mu2\sum_{k=1}^N(u(t),w_k)^2.
 \ee
Similar to \eqref{fo4} we have
\begin{multline}\label{nd2}  \Cal E(t)\geq
\frac12\|\ptu\|^2+\frac12\left(\nu-a\la_{N+1}^{-1}\right)\|\Nx
u\|^2+\frac12(\mu -a)\sum_{k=1}^N(u,w_k)^2+\frac1p\int_\Om |u|^pdx\\
\geq \frac12\|\ptu\|^2+\frac\nu2\|\Nx
u\|^2+\frac1p \|u(t)\|^p_{L^p(\Om)}.
\end{multline} This estimate implies that the function $\Cal E(t)$ is
non-negative, for $t\geq 0$. Let us integrate \eqref{nfo3} over the
interval $(0,t)$:
\be\label{nd3} \Cal E(0)-\Cal
E(t)=\int_0^t\|\ptu(\tau)\|_m^md\tau. \ee
Taking the inner product of
\eqref{nfo1} in $L^2(\Om)$ with $u$ gives
$$
\frac d{dt}(u,\ptu)=
\|\ptu\|^2-\nu\|\Nx u\|^2-b\int_\Om
u|\ptu|^{m-2}\ptu dx+a\|u\|^2-\int_\Om|u|^pdx-\mu
\sum\limits_{k=1}^N(u,w_k)^2.
$$
By using notation \eqref{nd1} we can rewrite the last relation
in the following form:
\begin{multline}\label{nd4} \frac
d{dt}(u,\ptu)=\frac32\|\ptu\|^2 -\Cal E(t)-\frac \nu2\|\Nx
u\|^2\\
+\frac
a2\|u\|^2-\frac\mu2\sum\limits_{k=1}^N(u,w_k)^2-\frac{p-1}{p}\int_\Om|u|^pdx-
b\int_\Om u|\ptu|^{m-2}\ptu dx. \end{multline}
By using inequality \eqref{QN} we obtain from \eqref{nd4}
\begin{multline*}
\frac d{dt}(u,\ptu)\leq - \Cal
E(t)+\frac32\|\ptu\|^2-\frac12(\mu-a)\sum\limits_{k=1}^N(u,w_k)^2\\
+\frac a2\sum\limits_{k=N+1}^\infty(u,w_k)^2-\frac\nu2\|\Nx
u\|^2+b\int_\Om|u||\ptu|^{m-1}dx \leq - \Cal
E(t)+\frac32\|\ptu\|^2\\
-\frac12(\mu-a)\sum\limits_{k=1}^N(u,w_k)^2-\frac12(\nu-a\lambda_{N+1}^{-1})\|\Nx
u\|^2+b\int_\Om|u||\ptu|^{m-1}dx.
\end{multline*}
Taking into account condition \eqref{ncF1} we deduce the
inequality
$$
\Cal E(t)\leq -\frac d{dt}
(u(t),\ptu(t))+\frac32\|\ptu(t)\|^2+b\int_\Om|u(x,t)|\ptu(x,t)|^{m-1}dx.
$$
After integration over the interval $(0,t)$, we obtain
\begin{multline}\label{nd5} \int_0^t\Cal E(\tau)d\tau\leq (u(0),\ptu(0))-(u(t),
\ptu(t))\\
+ \frac32\int_0^t\|\ptu(\tau)\|^2d\tau
+b\int_0^t\int_\Om|u(x,\tau)|\ptu(x,\tau)|^{m-1}dxd\tau.
\end{multline}
Since $\Cal E(t)\leq \Cal E(0)$ and $p\geq 2$, then due to \eqref{nd2}, we have
\be\label{nde1}
|(u(0),\ptu(0))-(u(t), \ptu(t))|\leq C,
 \ee
where $C$ depends on the initial data.\\
 By using
the H\"{o}lder inequality and estimate \eqref{nd3}, we estimate the
second term on the right-hand side of \eqref{nd5}:
\begin{multline}\label{nde2}
\int_0^t\|\ptu(\tau)\|^2d\tau
=\int_0^t\int_\Om|\ptu(x,\tau)|^2\cdot1dxd\tau\\
\leq
\left(\int_0^t\|\ptu(\tau)\|_{L^m(\Om)}^md\tau\right)^{\frac2m}(|\Om|t)^{\frac
{m-2}m}\leq Ct^{\frac {m-2}m}.
\end{multline}

The third term on the right-hand side of \eqref{nd5} we estimate again by using
the estimates \eqref{nd3} similarly (recalling that $m\leq p$):
\begin{multline}\label{nde3} \int_0^t\int_\Om |\ptu(x,\tau)|^{m-1}|u(x,\tau)|dx
d\tau\\
\leq \left( \int_0^t\int_\Om|\ptu(x,\tau)|^mdx
d\tau\right)^{\frac{m-1}m} \left( \int_0^t\int_\Om|u(x,\tau)|^mdx
d\tau\right)^{\frac{1}m}\leq Ct^{\frac1m} .
\end{multline}
Since $\Cal E(t)$ is non-increasing,  positive function we have:
 \be\label{nde4}
t\Cal E(t)\leq \int_0^t \Cal E(\tau)d\tau.
\ee
 Thus employing \eqref{nde1}-\eqref{nde4} we obtain from \eqref{nd5}
 $$
\Cal E(t)\leq C t^{-\frac{m-1}m}.
 $$
 Hence
 \be\label{ndest}
 \|\ptu(t)\|^2+\|\Nx u(t)\|^2+\|u(t)\|_{L^p(\Om)}^p\leq C t^{-\frac{m-1}m}.
 \ee
\end{proof}
\begin{remark}
We would like to note that unlike the result on finite  set of functionals determining long time behavior of solutions
 to equation
  $$
 \partial_t^2u-\nu \Dx u +b|\ptu|^{m-2}\ptu-au+|u|^{p-2}u=0, \ x \in \Om, t>0,
  $$
under the homogeneous Dirichlet's boundary condition, established in \cite{ChKa},
(as in the case of the equation \eqref{fo1}), we do not require restrictions neither on the dimension of the domain $\Om$ 
nor on the parameters $m>0, p>0$. It suffices to know that  problem \eqref{fo1}-\eqref{fo2a} has a global solution such that (see, e.g., \cite{Lions}):
$$
u\in L^{\infty}\left(\R^+;H^1_0(\Om)\cap L^{p}(\Om)\right)\cap L^{m+2}(\R^+; L^{m+2}(\Om)), \ u\in
L^{\infty}\left(\R^+; L^{2}(\Om)\right).
$$

\end{remark}

\section{Nonlinear Strongly Damped Wave equation}
In this section, we study the problem of feedback control of initial boundary
value problem for nonlinear strongly damped equation with controllers involving finitely many Fourier modes and by nodal observables.

{\bf 1. Feedback control employing finitely many nodal valued observables.}

First we consider first the following problem
\be\label{st1} \pt^2u-\px^2u-b\p_x^2\p_t u-a u+ f(u)=-\mu
\sum_{k=1}^Nhu(\bar{x}_k)\delta (x-x_k), \ x \in (0,L), t>0, \ee
\be\label{st2} u(0,t)=u(L,t)=0, \ x \in (0,L), t>0, \ee where
$x_k,\bar{x}_k \in J_k=[(k-1)\frac LN, k \frac LN],
k=1,...,N$,$h=\frac LN$, $f(\cdot)$ is continuously differentiable
function that satisfies the conditions \eqref{F}, $\delta(x-x_k)$ is
the Dirac delta function, $a,b$ and $\mu$ are given positive
parameters.

Our estimates will be based on the following lemma:
\begin{lemma}\label{lem3} (see,e.g., \cite{AzTi})
Let $x_k, \overline{x}_k  \in J_k=[(k-1) \, h, k \, h], k=1,..,N, $
where $h=\frac{L}{N}$, $N \in \Z ^{+}.$ Then for every $\varphi \,
\in \, H^1(0,L)$ the following inequalities hold true
\be\label{L1}
\sum_{k=1}^{N} | \varphi(x_k)- \varphi(\overline{x}_k)|^ 2 \leq \, h
\, \|\varphi_x\|^2_{L^2}, \ee and \be\label{L2} \|\varphi\|^2 \leq
\, 2 \, \left[h \, \sum_{k=1}^{N} |\varphi(x_k)|^2+ h^2 \,
\|\varphi_x\|^2 \right].
\ee
\end{lemma}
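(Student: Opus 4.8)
The plan is to prove both inequalities by the same elementary device: represent differences (and values) of $\varphi$ through integrals of $\varphi_x$, apply the Cauchy--Schwarz inequality on each subinterval $J_k$, and then sum over $k$, exploiting the fact that the $J_k$ partition $(0,L)$. Since $\varphi\in H^1(0,L)$ is absolutely continuous, the fundamental theorem of calculus is available pointwise, and the length $|J_k|=h$ will supply the powers of $h$ appearing on the right-hand sides.

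For \eqref{L1}, I would start from the representation
\[
\varphi(x_k)-\varphi(\overline{x}_k)=\int_{\overline{x}_k}^{x_k}\varphi_x(s)\,ds,
\]
valid because both $x_k$ and $\overline{x}_k$ lie in $J_k$. Applying Cauchy--Schwarz and using $|x_k-\overline{x}_k|\le h$ together with $J_k\subset(0,L)$ gives
\[
|\varphi(x_k)-\varphi(\overline{x}_k)|^2\le |x_k-\overline{x}_k|\int_{J_k}|\varphi_x(s)|^2\,ds\le h\int_{J_k}|\varphi_x(s)|^2\,ds.
\]
Summing over $k=1,\dots,N$ and using that the $J_k$ are essentially disjoint with $\bigcup_k J_k=(0,L)$, so that $\sum_k\int_{J_k}|\varphi_x|^2=\|\varphi_x\|_{L^2}^2$, yields exactly \eqref{L1}.

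For \eqref{L2}, the idea is to control the $L^2$ mass of $\varphi$ on each $J_k$ by its nodal value $\varphi(x_k)$ plus a derivative correction. For $x\in J_k$ I would write $\varphi(x)=\varphi(x_k)+\int_{x_k}^{x}\varphi_x(s)\,ds$, and then estimate
\[
|\varphi(x)|^2\le 2|\varphi(x_k)|^2+2\Big(\int_{J_k}|\varphi_x(s)|\,ds\Big)^2\le 2|\varphi(x_k)|^2+2h\int_{J_k}|\varphi_x(s)|^2\,ds,
\]
using $(a+b)^2\le 2a^2+2b^2$ and Cauchy--Schwarz. Integrating this in $x$ over $J_k$, whose length is $h$, produces $\int_{J_k}|\varphi|^2\le 2h|\varphi(x_k)|^2+2h^2\int_{J_k}|\varphi_x|^2$; summing over $k$ then gives \eqref{L2}.

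There is no serious obstacle here, as both estimates reduce to one-dimensional Sobolev arguments on a single interval. The only point deserving care is the bookkeeping of constants: one factor of $h$ comes from the Cauchy--Schwarz bound on the integral of $\varphi_x$ and a second factor of $h$ from integrating the constant-in-$x$ estimate over $J_k$, and these must combine to give precisely the $h^2$ in the derivative term of \eqref{L2}; likewise the factor $2$ from $(a+b)^2\le 2a^2+2b^2$ is exactly the overall constant displayed in \eqref{L2} and should not be absorbed or doubled.
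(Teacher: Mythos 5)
Your proof is correct, and it is the standard argument (fundamental theorem of calculus on each $J_k$, Cauchy--Schwarz, then summation over the essentially disjoint subintervals); the paper itself states this lemma without proof, citing \cite{AzTi}, where essentially the same computation is carried out. The constant bookkeeping in your derivation of \eqref{L2} — the factor $2$ from $(a+b)^2\le 2a^2+2b^2$ and the two factors of $h$ — matches the stated inequality exactly, so nothing further is needed.
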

Taking the $H^{-1}$ action of \eqref{st1} on $\ptu+\eb u \in H^1$, where $\eb>0$, to be determined, we
get
\begin{multline}\label{st3}
\frac d{dt}\left[\frac12\|\ptu\|^2+\frac12(1+\eb
b)\|\px{u}\|^2-a\|u\|^2+(F(u),1)+\eb(u,\ptu)\right]+\\
b\|\p_t\p_x u\|^2+\eb \|\px{u}\|+\eb(f(u),u)-a \eb\|u\|^2-\eb\|\ptu\|^2=\\
-\mu h\sum_{k=1}^{N}u(\bar{x}_k)\ptu(x_k)-\eb\mu h
\sum_{k=1}^{N}u(\bar{x}_k)u(x_k).
\end{multline}
By using the equalities
$$
\sum_{k=1}^{N}u(\bar{x}_k)\ptu(x_k)=\frac12 \frac
d{dt}\sum_{k=1}^{N}u^2(\bar{x}_k)+\sum_{k=1}^{N}
u(\bar{x}_k)\left(\ptu(x_k)-\ptu(\bar{x}_k)\right)
$$
and
$$
\sum_{k=1}^{N}u(\bar{x}_k)u(x_k)=\sum_{k=1}^{N}u^2(\bar{x}_k)+\sum_{k=1}^{N}u(\bar{x}_k)\left(\ptu(x_k)-u(\bar{x}_k)\right),
$$
we can rewrite \eqref{st3} in the following form
\begin{multline}\label{st4} \frac d{dt} E_\eb(t)+b\|u_{xt}\|^2+\eb \|\px{u}\|+\eb(f(u),u)-a
\eb\|u\|^2-\eb\|\ptu\|^2 +\eb \mu
h\sum_{k=1}^{N}u^2(\bar{x}_k)=\\
-\mu
h\sum_{k=1}^{N}u(\bar{x}_k)\left(\ptu(x_k)-\ptu(\bar{x}_k)\right)-\eb
\mu h \sum_{k=1}^{N}u(\bar{x}_k)\left(u(x_k)-u(\bar{x}_k)\right),
\end{multline}
where
$$
E_\eb(t):=\frac12\|\ptu\|^2+\frac12(1+\eb
b)\|\px{u}\|^2-a\|u\|^2+(F(u),1)+\eb(u,\ptu)+\frac{\mu
h}2\sum_{k=1}^{N}u^2(\bar{x}_k).
$$
By using inequality \eqref{L2} we obtain
\begin{multline}\label{st8} E_\eb(t)\geq
\frac14\|\ptu\|^2+\left[\frac12(1+\eb
b)-2(a+\eb^2)h^2\right]\|\px{u}\|^2+\eb(F(u),1)+\\
\left[\frac {\mu
h}2-2(a+\eb^2)h\right]\sum_{k=1}^{N}u^2(\bar{x}_k). \end{multline}

Employing \eqref{L1} we get
\be\label{st9} \mu
h\sum_{k=1}^{N}u(\bar{x}_k)\left(\ptu(x_k)-\ptu(\bar{x}_k)\right)\leq
\frac{\eb\mu h}4\sum_{k=1}^{N}u^2(\bar{x}_k)+\frac{\mu
h^2}{\eb}\|\px \ptu\|^2, \ee
\be\label{st10} \mu
h\sum_{k=1}^{N}u(\bar{x}_k)\left(u(x_k)-u(\bar{x}_k)\right)\leq
\frac{\eb\mu h}4\sum_{k=1}^{N}u^2(\bar{x}_k)+\frac{\mu
h^2}{\eb}\|\px u\|^2.
 \ee
Now we choose $\eb =\frac{b\la_1}{2}$, use the Poincar\'{e} inequality,
\eqref{pnk}, and inequalities \eqref{st9} and
\eqref{st10} in \eqref{st4} and  obtain:
\begin{multline*}
 \frac d{dt} E_\eb(t)+\frac b2\|\px \ptu\|^2+\left(\eb -
 \frac4\eb\mu h^2\right) \|\px{u}\|^2+\eb(F(u),1)-a
\eb\|u\|^2+\\
\frac12\eb \mu h\sum_{k=1}^{N}u^2(\bar{x}_k)\leq 0.
\end{multline*}
We employ here inequality \eqref{L2} to obtain
\begin{multline}\label{st11}
 \frac d{dt} E_\eb(t)+\frac b2\|u_{xt}\|^2+\left(\eb -\frac4\eb\mu h^2-4a\eb h^2\right)
 \|\px{u}\|^2+\eb(F(u),1)+\\
 (\frac12\eb \mu h-2a\eb h)\sum_{k=1}^{N}u^2(\bar{x}_k)\leq 0.
\end{multline} It is not difficult to see that if $\mu$ is large enough such that
 \be\label{sc1} \mu
>4\left(a+\frac{\la_1^2b^2}{4}\right),
\ee and $N=\frac L h$ is large enough such that
\be\label{sc2}
\frac{\la_1b}{2}-2h^2\left(\frac\mu{\la_1b} -a\la_1 b\right)>0, \ee

\be\label{sc2a} \frac{b^2\la_1^2}{4}-a^2\la_1^2b^2h^2-\mu h^2>0, \ee
  then there exists $d_1>0$ such that
\be\label{sc3} E_\eb(t)\geq
d_1\left(\|\ptu\|^2+\|\px{u}\|^2\right),
 \ee
 and that there exists a positive number $\delta$ such that
\be\label{sc4} \frac b2\|u_{xt}\|^2+\left(\eb -\frac4\eb\mu
h^2\right) \|\px{u}\|^2+\eb(F(u),1)-a \eb\|u\|^2+ \frac12\eb \mu
h\sum_{k=1}^{N}u^2(\bar{x}_k)\geq \delta E_\eb(t). \ee
By virtue of
\eqref{sc4} we deduce from \eqref{st11} the inequality
$$
\frac d{dt} E_\eb(t)+\delta E_\eb(t)\leq 0.
$$
This inequality and inequality \eqref{sc3} imply the exponential
stabilization estimate
\be\label{est2} \|\ptu(t)\|^2+\|\px{u}(t)\|^2
\leq D_0 e^{-\delta t}.
\ee

Consequently we have proved the following:
\begin{theorem}\label{stT1}
Suppose that conditions \eqref{sc1}-\eqref{sc2a} are satisfied. Then all
solutions of the problem \eqref{st1}-\eqref{st2} tend to zero with
an exponential rate, as $t \to \infty$.
\end{theorem}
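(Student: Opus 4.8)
The plan is to run an energy argument built around a suitably modified energy functional, in the same spirit as the finite-volume case of Theorem \ref{t1}. First I would test the equation \eqref{st1} against $\ptu+\eb u$ in the $H^{-1}$--$H^1$ duality, with $\eb>0$ a free parameter to be fixed later; since the control term is a sum of Dirac masses $\delta(x-x_k)$, this pairing is legitimate precisely because $\ptu+\eb u\in H^1(0,L)$ has well-defined point values. This produces an energy identity whose dissipative part carries the strong damping $b\|\px\ptu\|^2$ together with the nodal control sum, and whose troublesome pieces are the cross terms $\mu h\sum_k u(\bar{x}_k)\ptu(x_k)$ and $\eb\mu h\sum_k u(\bar{x}_k)u(x_k)$.

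The key algebraic step is to split each nodal product: write $u(\bar{x}_k)\ptu(x_k)=\tfrac12\tfrac{d}{dt}u^2(\bar{x}_k)+u(\bar{x}_k)\bigl(\ptu(x_k)-\ptu(\bar{x}_k)\bigr)$, and similarly for the second sum. The exact-derivative parts are absorbed into a modified energy $E_\eb(t)$ that carries the extra positive term $\tfrac{\mu h}2\sum_k u^2(\bar{x}_k)$, while the leftover difference terms are exactly what Lemma \ref{lem3} is built to handle, since \eqref{L1} controls $\sum_k|\varphi(x_k)-\varphi(\bar{x}_k)|^2$ by $h\|\varphi_x\|^2$. Using \eqref{L1} together with Young's inequality, I would dominate these remainders by a small multiple of the nodal sum plus a term of order $\tfrac{\mu h^2}{\eb}\bigl(\|\px\ptu\|^2+\|\px u\|^2\bigr)$, which for $h=L/N$ small is reabsorbed into the strong-damping dissipation and into the $\|\px u\|^2$ term.

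Next I would establish two one-sided bounds. For coercivity, inequality \eqref{L2} lets me trade the lower-order $\|u\|^2$ contributions for the nodal sum plus an $h^2\|\px u\|^2$ correction, and since $F(u)\ge0$ by \eqref{F}, this yields $E_\eb(t)\ge d_1\bigl(\|\ptu\|^2+\|\px u\|^2\bigr)$ for some $d_1>0$, provided $h$ is small (this is where \eqref{sc1} and \eqref{sc2a} enter). For dissipativity, after choosing $\eb=\tfrac{b\la_1}2$ and invoking the Poincaré inequality \eqref{pnk} to convert $\eb\|\px u\|^2-a\eb\|u\|^2$ into a positive multiple of $\|\px u\|^2$, I would verify that the full dissipative functional dominates $\delta E_\eb(t)$ for some $\delta>0$; here the smallness of $h$ in \eqref{sc2} and the size of $\mu$ in \eqref{sc1} keep every coefficient positive. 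Combining the two bounds gives $\tfrac{d}{dt}E_\eb(t)+\delta E_\eb(t)\le0$, and Gronwall's inequality together with the coercivity estimate \eqref{sc3} delivers the exponential bound \eqref{est2}, proving the theorem. (Global existence of solutions in the appropriate class is standard and can be cited as in Theorem \ref{t1}.)

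The main obstacle, and the place where all the hypotheses \eqref{sc1}--\eqref{sc2a} are consumed, is the simultaneous bookkeeping of coefficients: the single mesh parameter $h$ must be small enough that the nodal-difference remainders are absorbed by the dissipation, yet the modified energy $E_\eb$ must remain coercive from below, and both requirements must hold for one admissible choice $\eb=b\la_1/2$ and one $\delta>0$. Checking that \eqref{sc1}--\eqref{sc2a} indeed force every relevant coefficient to be positive at once is the delicate, though elementary, heart of the argument.
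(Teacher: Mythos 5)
Your proposal is correct and follows essentially the same route as the paper: pairing \eqref{st1} with $\ptu+\eb u$, splitting the nodal cross terms into exact time-derivatives plus differences controlled by \eqref{L1}, using \eqref{L2} for coercivity of the modified energy, fixing $\eb=\frac{b\la_1}{2}$, and closing with Gronwall under \eqref{sc1}--\eqref{sc2a}. No substantive differences to report.
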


\begin{remark}By using similar arguments we can prove an analog to
Theorem \ref{stT1} for solutions of the semilinear pseudo-hyperbolic
equation \be\label{stR} \pt^2u-u_{xxtt}-\nu \px^2u-bu_{xxt}-a u+
f(u)=-\mu \sum_{k=1}^Nhu(\bar{x}_k)\delta (x-x_k), \ x \in (0,L),
t>0,
 \ee
 under the boundary conditions \eqref{st2} or under the periodic
 boundary conditions. Here $a,b,\nu$ are positive parameters,
  and $f$  satisfies  conditions \eqref{F},  $\mu$ and $N$ should be chosen appropriately large enough.
 \end{remark}
 \vspace{0.5cm}
\subsection{Feedback control employing  finitely many Fourier modes.}
The second problem we are going to study in this section is the
following feedback problem
\be\label{sw2}
\begin{cases}
 \pt^2u- \nu\Dx u -b\Dx
\ptu -a u+|u|^{p-2}u=  -\mu \sum\limits_{k=1}^N(u,w_k)w_k, \ x \in \Om,
t>0,\\
u(x,0)=u_0(x), \ \ptu(x,0)=u_1(x), \ x \in \Om,\\
 u=0, \  x \in \p \Om, \ t>0,
\end{cases}
\ee
where $\nu>0,b>0,a>0, p>2$ are given
parameters.\\
The following theorem guarantees the exponential feedback stabilization
of solutions of problem \eqref{sw2}
\begin{theorem}\label{ThSt}
Suppose that $\mu$ is large enough such that
\be\label{strc1}
\mu >2a+\frac14\delta_0\la_1b,
\ee
and $N$ is large enough such that
\be\label{strc2}
\nu\geq\left(2a+\frac14b\la_1\delta_0\right)\la_{N+1}^{-1},
\ee
where
\be\label{del0}
\delta_0:=\frac{b\la_1\nu}{2\nu+b^2\la_1}.
\ee
Then the solution of \eqref{sw2} satisfies the following exponential decay estimate:
\be\label{STest}
\|\ptu(t)\|^2+\|\Nx u(t)\|^2+\|u(t)\|_{L^p(\Om)}^p\leq E_0e^{-\delta_0t},\,\, \mbox{for all}\, t >0.
\ee
 with a constant  $E_0$ depending on initial data.
\end{theorem}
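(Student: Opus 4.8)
The plan is to follow the energy method used in the proof of Theorem \ref{TF}, the only genuinely new feature being the treatment of the strong damping term $-b\Dx\ptu$. First I would take the $L^2(\Om)$ inner product of the equation in \eqref{sw2} with the multiplier $\ptu+\eb u$, where $\eb>0$ is a free parameter to be fixed later, and integrate by parts. The strongly damped term then contributes $b\|\Nx\ptu\|^2$ when tested against $\ptu$ and $\frac{\eb b}{2}\frac{d}{dt}\|\Nx u\|^2$ when tested against $\eb u$, so one is led to the differential identity
\begin{multline*}
\frac{d}{dt}E_\eb(t)+b\|\Nx\ptu\|^2-\eb\|\ptu\|^2+\eb\nu\|\Nx u\|^2\\
-\eb a\|u\|^2+\eb\|u\|_{L^p(\Om)}^p+\eb\mu\sum_{k=1}^N(u,w_k)^2=0,
\end{multline*}
with the Lyapunov functional
\begin{multline*}
E_\eb(t):=\frac12\|\ptu\|^2+\frac{\nu+\eb b}{2}\|\Nx u\|^2-\frac a2\|u\|^2+\frac1p\|u\|_{L^p(\Om)}^p\\
+\frac\mu2\sum_{k=1}^N(u,w_k)^2+\eb(u,\ptu).
\end{multline*}
As elsewhere in the paper, this computation is first performed on smooth (say Galerkin) approximations and then passed to the limit, existence of the required global solution being guaranteed by the standard theory (see \cite{Lions}).

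Second, exactly as in the derivation of \eqref{fo4}--\eqref{ph1}, I would establish a coercive lower bound $E_\eb(t)\ge c\big(\|\ptu\|^2+\|\Nx u\|^2+\|u\|_{L^p(\Om)}^p\big)$. The cross term is absorbed by Cauchy--Schwarz and Young's inequality \eqref{yng}, in the form $\eb|(u,\ptu)|\le\frac14\|\ptu\|^2+\eb^2\|u\|^2$, and the indefinite contribution $-\frac a2\|u\|^2$ (together with the $\|u\|^2$ produced by the cross term) is split through the eigenfunction expansion: its projection onto $\mathrm{span}\{w_1,\dots,w_N\}$ is dominated by $\frac\mu2\sum_{k=1}^N(u,w_k)^2$, while its high-mode part is controlled by $\la_{N+1}^{-1}\|\Nx u\|^2$ via the Poincar\'e-like inequality \eqref{QN}. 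Conditions \eqref{strc1} and \eqref{strc2} are precisely what make both of these contributions nonnegative.

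Third, I would add $\delta_0E_\eb(t)$ to the identity and show $\frac{d}{dt}E_\eb+\delta_0E_\eb\le0$. The decisive difference from the weakly damped Theorem \ref{TF} is that the strong damping yields dissipation of $\ptu$ only in $H^1_0(\Om)$; to recover control of the kinetic energy one invokes the Poincar\'e inequality \eqref{pnk}, in the form $b\|\Nx\ptu\|^2\ge b\la_1\|\ptu\|^2$. Collecting the coefficients of $\|\ptu\|^2$, $\|\Nx u\|^2$, $\sum_{k=1}^N(u,w_k)^2$ and $\|u\|_{L^p(\Om)}^p$ (the last being nonpositive as soon as $\eb\ge\delta_0/p$), treating the new cross term $\delta_0\eb(u,\ptu)$ again by \eqref{yng}, and dispatching the residual $\|u\|^2$ once more through \eqref{QN}, one is left with requiring a quadratic form in $\|\ptu\|$ and $\|\Nx u\|$ to be negative semidefinite. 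The $\|\ptu\|^2$-dissipation available is $b\la_1-\eb-\frac{\delta_0}2$ and the $\|\Nx u\|^2$-dissipation is $\eb\nu-\frac{\delta_0(\nu+\eb b)}2$, and it is the need to keep both of these of the correct sign, for a suitable fixed $\eb=\eb(b,\la_1,\nu)$, that pins down the rate $\delta_0$; rewriting it as $\frac1{\delta_0}=\frac2{b\la_1}+\frac b\nu$ exhibits $\delta_0$ as a harmonic-type combination of the rate $\frac{b\la_1}2$ at which \eqref{pnk} converts the strong damping into decay of $\|\ptu\|^2$ and the rate $\frac\nu b$ governing the transfer of decay to $\|\Nx u\|^2$ through the cross term. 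The extra term $\frac14\delta_0\la_1 b$ appearing in \eqref{strc1}--\eqref{strc2} is exactly the $\|u\|^2$-budget generated in this step.

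Finally, once $\frac{d}{dt}E_\eb+\delta_0E_\eb\le0$ is in hand, Gronwall's inequality gives $E_\eb(t)\le E_\eb(0)e^{-\delta_0 t}$, and combining this with the coercive lower bound of the second step yields \eqref{STest} with $E_0$ a fixed multiple of $E_\eb(0)$, hence depending only on the initial data. I expect the main obstacle to be the bookkeeping in the third step: in contrast with the weakly damped equation, where the term $b\|\ptu\|^2$ dissipates the kinetic energy directly and the admissible rate is simply $b/2$, here the sole source of $\|\ptu\|^2$-dissipation is the amount $b\la_1\|\ptu\|^2$ extracted from the strong damping, so the attainable rate is throttled by the simultaneous requirement of extracting $\|\Nx u\|^2$-decay from the cross term $\eb(u,\ptu)$. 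Reconciling these two competing demands while keeping every coefficient of the correct sign is what forces the precise value of $\delta_0$ and the thresholds \eqref{strc1}--\eqref{strc2}.
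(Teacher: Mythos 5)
Your proposal is correct and follows essentially the same route as the paper: the multiplier $\ptu+\eb u$ with $\eb=\tfrac{b\la_1}{2}$, the same Lyapunov functional $E_\eb$, the coercive lower bound via \eqref{QN} and the conditions \eqref{strc1}--\eqref{strc2}, the conversion of the strong damping into kinetic-energy decay through the Poincar\'e inequality, and the balancing of the two dissipation budgets that forces the rate $\delta_0$ (your identity $\frac1{\delta_0}=\frac2{b\la_1}+\frac b\nu$ and your identification of $\frac14\delta_0\la_1 b$ as the residual $\|u\|^2$-budget match the paper's computation exactly). The only cosmetic difference is that you keep $\eb$ free until the end, whereas the paper fixes $\eb=\tfrac{b\la_1}{2}$ from the outset.
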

\begin{proof} The proof of this theorem is similar to the proof of Theorem
\ref{TF}.\\
The energy equality in this case has the form
\begin{multline}\label{ST2}
\frac d{dt} E_\eb(t) + b\|\Nx \ptu(t)\|^2-\eb \|\ptu(t)\|^2\\
+\eb\nu\|\Nx
u(t)\|^2+\eb\int_\Om|u(x,t)|^pdx-\eb a \|u\|^2+\eb\mu
\sum\limits_{k=1}^N(u(t),w_k)^2=0,
\end{multline}
where $\eb=\frac{b\la_1}2,$  and
$$
E_\eb(t) :=\frac12\|\ptu\|^2+\frac12(\nu +\eb b)\|\Nx u\|^2-\frac
a2\|u\|^2+\frac1p\|u\|_{L^p(\Om)}^p+\frac \mu2\sum\limits_{k=1}^n(u,w_k)^2+
\eb(u,\ptu).
$$
Employing the Young inequality \eqref{yng}, the Poincar\'{e} inequality \eqref{pnk},  the Poincar\'{e}-like inequality \eqref{pnk}, and the conditions \eqref{strc1} and \eqref{strc2} we get:
\begin{multline}\label{eeb}
E_\eb(t)\geq \frac14\|\ptu\|^2+\frac12\left( \nu +\frac{b^2\la_1}2\right)\|\Nx u\|^2-\left(\frac a2 +\frac{b^2\la_1^2}4\right)\|u\|^2
+\frac1p\|u\|_{L^p(\Om)}^p\\
+\frac \mu2\sum\limits_{k=1}^n(u,w_k)^2\geq  \frac14\|\ptu\|^2+\frac\nu2\|\Nx u\|^2-\frac a2\|u\|^2+
\frac \mu2\sum\limits_{k=1}^n(u,w_k)^2+\frac1p\|u\|_{L^p(\Om)}^p\\
\geq \frac14\|\ptu\|^2+\frac\nu4\|\Nx u\|^2+\left( \frac \nu4-\frac a2\la_{N+1}^{-1}\right)\|\Nx u\|^2+\left(\frac \mu2-\frac a2\right)
\sum\limits_{k=1}^n(u,w_k)^2+  \frac1p\|u\|_{L^p(\Om)}^p\\
\geq \frac14\|\ptu\|^2+\frac \nu4 \|\Nx u\|^2+\frac1p\|u\|_{L^p(\Om)}^p.
\end{multline}

Let $\delta\in (0,\eb)$, be another parameter to be chosen below. Then employing the Poincar\'{e} inequality, and the fact that $\delta <\eb=\la_1b/2$, and $p>2,$ we  obtain from \eqref{ST2}
\begin{multline}\label{bd1}
\frac d{dt}E_\eb(t)+\delta E_\eb(t)+\left(\frac b2-\frac{\delta}{2\la_1}\right)\|\Nx \ptu\|^2\\
+\left(\frac{b\la_1\nu}2-\frac{\delta}2 (\nu+\frac{b^2\la_1}2)\right)\|\Nx u\|^2-\frac{ab\la_1}4\|u\|^2+
\left(\frac{b\la_1}2-\frac\delta p\right)\|u\|_{L^p(\Om)}^p\\
+\frac\mu2\left( b\la_1-\delta\right)\sum\limits_{k=1}^n(u,w_k)^2-\frac{\delta \la_1b}2(u,\ptu)\leq 0.
\end{multline}
By using the inequality
$$
\frac12 \delta\la_1b|(u,\ptu)|\leq \frac12 \delta\la_1^ {\frac12}b\|\Nx \ptu\|\|u\|\leq       \frac \delta{2\la_1}\|\Nx \ptu\|^2+\frac\delta 8 \la_1^2b^2\|u\|^2,
$$
and assuming here that $\delta\in (0,\eb)$, we get
\begin{multline}\label{strc3}
\frac d{dt}E_\eb(t)+\delta E_\eb(t)+\left(\frac b2-\frac\delta{\la_1}\right)\|\Nx \ptu\|^2
+\frac12\left[\nu b\la_1 -\delta(\nu +\frac{b^2\la_1}2)\right]\|\Nx u\|^2\\
-\frac12\left(ab\la_1+\frac \delta 8 \la_1^2b^2\right)\|u\|^2+\frac \mu2(b\la_1-\delta)\sum\limits_{k=1}^n(u,w_k)^2.
\end{multline}

We choose in the last inequality $\delta=\delta_0$, defined in \eqref{del0},
and obtain the inequality
\be\label{strc3a}
\frac d{dt}E_\eb(t)+\delta_0 E_\eb(t)
+\frac14\nu b\la_1 \|\Nx u\|^2
-\frac{b\la_1}2\left(a+\frac \delta 8 \la_1b\right)\|u\|^2+\frac \mu 4b\la_1\sum\limits_{k=1}^n(u,w_k)^2.
\ee
Finally, employing in \eqref{strc3a}  inequality \eqref{QN}, and the conditions \eqref{strc1} and \eqref{strc2} we obtain the desired inequality
$$
\frac d{dt}E_\eb(t)+\delta_0 E_\eb(t)\leq 0.
$$
Thanks to \eqref{eeb}, this inequality implies the desired decay estimate \eqref{STest}.
\end{proof}
\begin{remark}\label{Bsq}
In a similar way we can prove exponential stabilization of solutions to  strongly damped Boussinesq equation,
with homogeneous Dirichlet boundary conditions
\be\label{bouss}
\begin{cases}
\p_t^2 u-\nu \p_x^4 u -b\p_x^2\ptu+\p_x^2\left(au -|u|^{p-2}u\right)=-\mu w, \ x\in (0,L), t>0,\\
u(0,t)=u(L,t)=\p_x^2u(0,t)=\p_x^2u(L,t)=0, \ \ t>0,
\end{cases}
\ee
where $a, \nu, b$ are given positive parameters, and $w$ is a controller of the
$$
w=\sum\limits_{k=1}^N \la_k(u,w_k)w_k,
\ \
\la_k=\frac{k^2\pi^2}{L^2}, \ \ w_k(x)=\sin\frac{k\pi}{L}.
$$
\end{remark}
Here also we can find $N$ and $\mu$ large enough such that
$$
\|\ptu(t)\|^2_{H^{-1}(0,L)}+\|\p_x^2u(t)\|^2\leq E_0e^{-\delta t}
$$
with some $\delta>0$, depending on parameters of the problem, i.e. $\nu, b, p$ and $L$.
 \begin{remark}\label{existence} It is worth noting that the
 estimates we obtained in Theorem \ref{TF} and Theorem \ref{ThSt} are valid
 for weak solutions of the corresponding problems from the class of
 functions such that
 $$
\ptu\in L^\infty(\R^+; L^2(\Om)), \ u\in L^\infty(\R^+; H_0^1(\Om)\cap
L^p(\Om)).
 $$
Existence of a weak solution for each of these problems, as well as
justification of our estimates  can be done by using the Galerkin
method (see e.g. \cite{Lions}). We would like also to note that the
feedback stabilization estimate for  problem
\eqref{fo1}-\eqref{fo2} is established, for arbitrary $p>2$, without
any restrictions on the spatial dimension of the domain $\Om$. As far as we
know, even uniqueness of a weak solution in the case $p>5, \Omega
\subset \R^3$, is an open problem.
 \end{remark}
 \begin{remark}\label{defu}
It is also worth mentioning that the feedback stabilization of nonlinear damped wave equation, nonlinear strongly damped wave equation, nonlinear wave equation with nonlinear damping term  with controllers involving finitely many parameters, can be shown by employing the concept of general determining functionals (projections) introduced in \cite{CJT},\cite{Cock97}, then exploited and developed for the study of dissipative wave equations  \cite{Ch}.\\
 Recently it was shown in \cite{FoJoKrTi1}, \cite{FoJoKrTi2} and \cite{JoSaTi} that the approach of a new feedback controlling of dissipative
 PDEs using finitely many determining parameters can be used to show that the global (in time) dynamics of the 2D Navier - Stokes equations, and of that of the 1D damped driven nonlinear Schr\"{o}dinger equation
 $$
 i\ptu -\px^2 u +i\gamma u +|u|^2u=f, \ \ b>0,
 $$
 can be embedded in an infinite-dimensional dynamical system induced by an ordinary differential equation,  called {\it determining form}, governed by a global Lipschitz vector field.\\
 The existence of determining form for  the long-time dynamics of nonlinear damped wave equation and nonlinear strongly damped wave equation is a subject of a future research.\\
 \end{remark}
 \noindent {\bf Acknowledgements.}  The work of V.~K.~Kalantarov was
supported in part by The Scientific and Research Council of Turkey,
grant no. 112T934. The work of E.S.~Titi was supported in part by the NSF grants DMS--1109640 and DMS--1109645.

\end{document}